\theoremstyle{definition}
\theoremstyle{plain}
\newtheorem{theorem}{Theorem}[section]
\newtheorem{lemma}{Lemma}[section]
\journal{Journal}
\begin{document}

\begin{frontmatter}

\title{A high-order L2 type difference scheme for the time-fractional diffusion equation\tnoteref{grant}}
\tnotetext[grant]{The reported study was jointly funded by RFBR (No. 20-51-53007) and NSFC (No. 12011530058)}

\author[label1]{Anatoly A. 
Alikhanov\corref{cor1}}
\cortext[cor1]{Corresponding author}
\ead{aaalikhanov@gmail.com}
\author[label2]{Chengming Huang}
\ead{chengming\_huang@hotmail.com}
\address[label1]{North-Caucasus Federal University, Pushkin str. 1,  Stavropol, 355017,  Russia}
\address[label2]{School of Mathematics and Statistics, Huazhong University of Science and Technology, Wuhan 430074, China}

	\begin{abstract}{ The present paper is devoted to constructing L2 type difference analog of the
			Caputo fractional derivative.
			The fundamental features of this difference operator are studied
			and it is used to construct difference schemes generating approximations
			of the second and fourth order in space and the $(3-\alpha)$ th-order in time
			for the time fractional diffusion equation with variable
			coefficients.  Stability of the schemes under consideration
			as well as their convergence with the rate equal
			to the order of the approximation error are proven. The received
			results are supported by the numerical computations performed for
			some test problems.}
	\end{abstract}
	
	\begin{keyword} fractional diffusion equation, finite difference
		method, stability, convergence
	\end{keyword}
	
\end{frontmatter}

\section{Introduction}
A significant growth of the researches' attention to the
fractional differential equations has been noticed lately . It is brought about by
many effective applications of fractional calculation to various
branches of science and engineering
\cite{Nakh:03,Old,Podlub:99,Hilfer:00,Kilbas:06,Uchaikin:08}. For
instance, we cannot dispense with mathematical language of fractional derivatives
when it comes to the description of the physical process of
statistical transfer which, as it is well known, brings us to diffusion
equations of fractional orders \cite{Nigma,Chuk}.

Let us consider the time fractional diffusion equation with variable
coefficients
\begin{equation}\label{ur01}
	\partial_{0t}^{\alpha}u(x,t)=\mathcal{L}u(x,t)+f(x,t) ,\,\, 0<x<l,\,\,
	0<t\leq T,
\end{equation}
\begin{equation}
	u(0,t)=0,\quad u(l,t)=0,\quad 0\leq t\leq T, \quad
	u(x,0)=u_0(x),\quad 0\leq x\leq l,\label{ur02}
\end{equation}
where
\begin{equation}
	\partial_{0t}^{\alpha}u(x,t)=\frac{1}{\Gamma(1-\alpha)}\int\limits_{0}^{t}\frac{\partial
		u(x,\eta)}{\partial\eta}(t-\eta)^{-\alpha}d\eta, \quad 0<\alpha<1
	\label{ur02.01}
\end{equation}
is the Caputo derivative of order $\alpha$,
$$
\mathcal{L}u(x,t)=\frac{\partial }{\partial
	x}\left(k(x,t)\frac{\partial u}{\partial x}\right)-q(x,t)u,
$$
$k(x,t)\geq c_1>0$, $q(x,t)\geq 0$ and $f(x,t)$ are given functions.

The time fractional diffusion equation constitutes a linear integro -
differential equation. Its solution in many cases cannot be found
in an analytical form; as a consequence it is required to apply numerical methods.
Nevertheless, in contrast to the classical case, when we numerically approximate a time
fractional diffusion equation on a certain time layer, we need information about all
the previous time layers. That is why algorithms for solving the 
time fractional diffusion equations are rather labour-consuming even in one - dimensional case.
When we pass to two - dimensional and three - dimensional
problems, their complexity grows significantly. In this respect
constructing stable differential schemes of higher order
approximation is a major task.

A common difference approximation of fractional
derivative  (\ref{ur02.01}) is the so-called $L1$ method
\cite{Old, Sun2} which is specified in the following way
\begin{equation}
    \partial_{0t_{j+1}}^{\alpha}u(x,t)=\frac{1}{\Gamma(1-\alpha)}\sum\limits_{s=0}^{j}\frac{u(x,t_{s+1})-u(x,t_{s})}{t_{s+1}-t_{s}}\int\limits_{t_{s}}^{t_{s+1}}\frac{d\eta}{(t_{j+1}-\eta)^{\alpha}}+r^{j+1}, 
    \label{ur02.02}
\end{equation}
where $0 = t_0 < t_1 < \ldots < t_{j+1}$, and $r^{j+1}$ is the local truncation
error. In the case of the uniform grid, $\tau=t_{s+1}-t_s$, for all
$s=0,1,\ldots, j+1$, it was proved that
$r^{j+1}=\mathcal{O}(\tau^{2-\alpha})$ \cite{Sun2,Lin,Alikh_arxiv3}.
The $L1$ method has been commonly used to solve the fractional
differential equations with the Caputo derivatives
\cite{Sun2,Lin,Alikh_arxiv3,ShkhTau:06,Liu:10,Alikh:12}.

The main idea of the traditional $L1$ formula for approximating Caputo fractional derivative $\partial_{0t}^\alpha f(t)$ of the function $f(t)$ is to replace the integrand $f(t)$ inside the integral by its piecewise linear interpolating polynomial (see \cite{Old, Sun2} ). A simple technique for improving the accuracy of $L1$ formula is to use piecewise high-degree interpolating polynomials instead of the linear interpolating polynomial. In general, the obtained numerical formulae in this way improve the accuracy of $L1$ formula from the order $2-\alpha$ to the order $r + 1-\alpha$, where $r \geq 2$ is the degree of the interpolating polynomial. When such formulae are applied to solve time-fractional PDEs, a key issue is the stability analysis of the corresponding methods for all $\alpha\in(0,1)$.

In \cite{Sun6} a new difference analog of the Caputo fractional
derivative with the order of approximation
$\mathcal{O}(\tau^{3-\alpha})$, called $L1-2$ formula, is
created. Based on this formula, calculations of difference
schemes for the time-fractional sub-diffusion equations in bounded
and unbounded spatial domains and the fractional ODEs are performed.
In \cite{Wang2019} the Caputo time-fractional derivative is discretized by a $(3 -\alpha)$ th-order numerical formula (called the $L2$ formula in this paper) which is constructed using piecewise quadratic interpolating polynomials. By developing a technique of discrete energy analysis, a full theoretical analysis of the stability and convergence of the method is carried out for all $\alpha\in(0,1)$.

Using piecewise quadratic interpolating polynomials, In \cite{AlikhanovJCP} a numerical formula (called $L2-1_\sigma$ formula) to approximate the Caputo fractional derivative $\partial_{0t}^\alpha f(t)$ at a special points with the numerical accuracy of order $3-\alpha$ was derived. Then some finite difference methods based on the $L2-1_\sigma$ formula were proposed for solving the time-fractional diffusion equation. In \cite{Gao, Ruilian}  $L2-1_\sigma$ formula was generalized and applied for solving the multi-term, distributed and variable order time-fractional diffusion equations. 

Difference schemes of the heightened order of approximation such as
the compact difference scheme \cite{Liu:10,Sun3,Sun4,Sun5,Wang2019} and
spectral method \cite{Lin,Lin2,Xu} were used to enhance the
spatial accuracy of fractional diffusion equations. 

By means of the energy inequality method, a priori estimates for the
solution of the Dirichlet, Robin and non-local boundary value problems for the
diffusion-wave equation with the Caputo fractional derivative have been
found in \cite{Alikh:12,Alikh:10, Alikh_nonloc}.

In the present paper we construct $L2$ type difference analog of the fractional Caputo
derivative with the order of approximation
$\mathcal{O}(\tau^{3-\alpha})$ for each  $\alpha\in(0,1)$. 
Features of the found difference operator are
investigated. Difference schemes of the second and fourth order of
approximation in space and the $(3-\alpha)$ th-order in time for the time
fractional diffusion equation with variable coefficients are
built. By means of the method of energy inequalities, the stability
and convergence  of these schemes are
proven. Numerical computations of some test problems confirming
reliability of the obtained results are implemented. The method can be
without difficulty expanded to other time fractional partial differential
equations with other boundary conditions.

\section{ The L2 type fractional numerical differentiation formula}

In this section we study a difference analog of the Caputo fractional
derivative with the  approximation order $\mathcal
O(\tau^{3-\alpha})$ and explore its fundamental features.

We consider the uniform grid  $\bar \omega_{\tau}=\{t_j=j\tau,\,
j=0,1,\ldots,M; \,T=\tau M\}$. For the Caputo fractional derivative
of the order  $\alpha$, $0<\alpha<1$, of the function  $u(t)\in
\mathcal{C}^{3}[0,T]$  at the fixed point  $t_{j+1}$,  $j\in \{1, 2,
\ldots, M-1\}$ the following equalities are valid
$$
\partial_{0t_{j+1}}^{\alpha}u(t)=\frac{1}{\Gamma(1-\alpha)}\int\limits_{0}^{t_{j+1}}\frac{u'(\eta)d\eta}{(t_{j+1}-\eta)^{\alpha}}
$$
\begin{equation}\label{ur0.99}
=
\frac{1}{\Gamma(1-\alpha)}\int\limits_{0}^{t_{2}}\frac{u'(\eta)d\eta}{(t_{j+1}-\eta)^{\alpha}}+
\frac{1}{\Gamma(1-\alpha)}\sum\limits_{s=2}^{j}\int\limits_{t_{s}}^{t_{s+1}}\frac{u'(\eta)d\eta}{(t_{j+1}-\eta)^{\alpha}}.
\end{equation}

On each interval $[t_{s-1},t_s]$ ($1\leq s\leq j$), applying the
quadratic interpolation ${\Pi}_{2,s}u(t)$ of $u(t)$ that uses three
points $(t_{s-1},u(t_{s-1}))$, $(t_{s},u(t_{s}))$ and
$(t_{s+1},u(t_{s+1}))$, we arrive at
$$
{\Pi}_{2,s}u(t)=u(t_{s-1})\frac{(t-t_{s})(t-t_{s+1})}{2\tau^2}
$$
$$
-u(t_{s})\frac{(t-t_{s-1})(t-t_{s+1})}{\tau^2}+u(t_{s+1})\frac{(t-t_{s-1})(t-t_{s})}{2\tau^2},
$$
\begin{equation}\label{ur0.991}
\left({\Pi}_{2,s}u(t)\right)'=u_{t,s}+u_{\bar tt,s}(t-t_{s+1/2}),
\end{equation}
and
\begin{equation}\label{ur0.992}
u(t)-{\Pi}_{2,s}u(t)=\frac{u'''(\bar\xi_s)}{6}(t-t_{s-1})(t-t_{s})(t-t_{s+1}),
\end{equation}
where $t\in[t_{s-1},t_{s+1}]$, $\bar\xi_s\in(t_{s-1},t_{s+1})$,
$t_{s-1/2}=t_s-0.5\tau$,  $u_{t,s}=(u(t_{s+1})-u(t_s))/\tau$,
$u_{\bar t,s}=(u(t_{s})-u(t_{s-1}))/\tau$.

In (\ref{ur0.99}), we make use of ${\Pi}_{2,s}u(t)$ in order to approximate $u(t)$ on
the interval $[t_{s-1},t_{s}]$ ($1\leq s\leq j$). In view of the equality
\begin{equation}\label{ur0.993}
\int\limits_{t_{s-1}}^{t_s}(\eta-t_{s-1/2})(t_{j+1}-\eta)^{-\alpha}d\eta=\frac{\tau^{2-\alpha}}{1-\alpha}b_{j-s+1}^{(\alpha)},
\quad 1\leq s\leq j
\end{equation}
with
$$
b_{l}^{(\alpha)}=\frac{1}{2-\alpha}\left[(l+1)^{2-\alpha}-l^{2-\alpha}\right]-\frac{1}{2}\left[(l+1)^{1-\alpha}+l^{1-\alpha}\right],\quad
l\geq 0,
$$
from (\ref{ur0.99}) and  (\ref{ur0.991}) we get the difference
analog of the Caputo fractional derivative of order $\alpha$
($0<\alpha<1$) for the function $u(t)$, at the points $t_{j+1}$
($j=1, 2, \ldots$), in this form:
$$
\partial_{0t_{j+1}}^{\alpha}u(\eta)=
\frac{1}{\Gamma(1-\alpha)}\int\limits_{0}^{t_{2}}\frac{u'(\eta)d\eta}{(t_{j+1}-\eta)^{\alpha}}+
\frac{1}{\Gamma(1-\alpha)}\sum\limits_{s=2}^{j}\int\limits_{t_{s}}^{t_{s+1}}\frac{u'(\eta)d\eta}{(t_{j+1}-\eta)^{\alpha}}
$$
$$
\approx
\frac{1}{\Gamma(1-\alpha)}\int\limits_{0}^{t_{2}}\frac{\left({\Pi}_{2,1}u(\eta)\right)'d\eta}{(t_{j+1}-\eta)^{\alpha}}+
\frac{1}{\Gamma(1-\alpha)}\sum\limits_{s=2}^{j}\int\limits_{t_{s}}^{t_{s+1}}\frac{\left({\Pi}_{2,s}u(\eta)\right)'d\eta}{(t_{j+1}-\eta)^{\alpha}}
$$
$$
=\frac{1}{\Gamma(1-\alpha)}\int\limits_{0}^{t_{2}}\frac{u_{t,1}+u_{\bar
tt,1}(\eta-t_{3/2})}{(t_{j+1}-\eta)^{\alpha}}d\eta
$$
$$
+\frac{1}{\Gamma(1-\alpha)}\sum\limits_{s=2}^{j}\int\limits_{t_{s}}^{t_{s+1}}\frac{u_{t,s}+u_{\bar
tt,s}(\eta-t_{s+1/2})}{(t_{j+1}-\eta)^{\alpha}}d\eta
$$
$$
=\frac{\tau^{1-\alpha}}{\Gamma{(2-\alpha)}}\left((a_{j}^{(\alpha)}-b_{j}^{(\alpha)}-b_{j-1}^{(\alpha)})u_{t,0}+(a_{j-1}^{(\alpha)}+b_{j-1}^{(\alpha)}+b_{j}^{(\alpha)})u_{t,1}\right)
$$
$$
+\frac{\tau^{1-\alpha}}{\Gamma{(2-\alpha)}}\sum\limits_{s=2}^{j}(-b_{j-s}^{(\alpha)}u_{t,s-1}+(a_{j-s}^{(\alpha)}+b_{j-s}^{(\alpha)})u_{
t,s})
$$
$$
=\frac{\tau^{1-\alpha}}{\Gamma{(2-\alpha)}}\left((a_{j}^{(\alpha)}-b_{j}^{(\alpha)}-b_{j-1}^{(\alpha)})u_{t,0}+(a_{j-1}^{(\alpha)}+b_{j-1}^{(\alpha)}+b_{j}^{(\alpha)}-b_{j-2}^{(\alpha)})u_{t,1}\right)
$$
$$
+\frac{\tau^{1-\alpha}}{\Gamma{(2-\alpha)}}\left(\sum\limits_{s=2}^{j-1}(-b_{j-s-1}^{(\alpha)}+a_{j-s}^{(\alpha)}+b_{j-s}^{(\alpha)})u_{
t,s}+(a_{0}^{(\alpha)}+b_{0}^{(\alpha)})u_{ t,j}\right)=
$$
\begin{equation}
=\frac{\tau^{1-\alpha}}{\Gamma{(2-\alpha)}}\sum\limits_{s=0}^{j}c_{j-s}^{(\alpha)}u_{t,s}=\Delta_{0t_{j+1}}^\alpha
u, \label{ur0.994}
\end{equation}
where
$$
a_{l}^{(\alpha)}=(l+1)^{1-\alpha}-l^{1-\alpha}, \quad l\geq 0;
$$

for $j=1$
\begin{equation}
c_{s}^{(\alpha)}=
\begin{cases}
a_{0}^{(\alpha)}+b_{0}^{(\alpha)}+b_{1}^{(\alpha)}, \quad\quad s=0,\\
a_{1}^{(\alpha)}-b_{1}^{(\alpha)}-b_{0}^{(\alpha)}, \quad\quad s=1,
\label{ur0.995.1}
\end{cases}
\end{equation}

for $j=2$
\begin{equation}
c_{s}^{(\alpha)}=
\begin{cases}
a_{0}^{(\alpha)}+b_{0}^{(\alpha)}, \quad\quad\quad\quad\quad\quad\,\,\,\, s=0,\\
a_{1}^{(\alpha)}+b_{1}^{(\alpha)}+b_{2}^{(\alpha)}-b_{0}^{(\alpha)},
\quad\, s=1,\\
a_{2}^{(\alpha)}-b_{2}^{(\alpha)}-b_{1}^{(\alpha)},
\quad\quad\quad\quad s=2,\label{ur0.995.2}
\end{cases}
\end{equation}

and for $j\geq 3$,
\begin{equation}
c_{s}^{(\alpha)}=
\begin{cases}
a_{0}^{(\alpha)}+b_{0}^{(\alpha)}, \quad\quad\quad\quad\quad\quad\quad \, s=0,\\
a_{s}^{(\alpha)}+b_{s}^{(\alpha)}-b_{s-1}^{(\alpha)}, \quad\quad\quad \, \quad 1\leq s\leq j-2,\\
a_{j-1}^{(\alpha)}+b_{j-1}^{(\alpha)}+b_{j}^{(\alpha)}-b_{j-2}^{(\alpha)}, \quad s=j-1,\\
a_{j}^{(\alpha)}-b_{j}^{(\alpha)}-b_{j-1}^{(\alpha)},
\quad\quad\quad\quad\, s=j. \label{ur0.995.3}
\end{cases}
\end{equation}

We name the fractional numerical differentiation formula
(\ref{ur0.994}) for the Caputo fractional derivative of order
$\alpha$ ($0<\alpha<1$) the L2 formula.

\begin{lemma} \label{lem1} For any $\alpha\in(0,1)$, $j=1, 2, \ldots, M-1$
and $u(t)\in \mathcal{C}^3[0,t_{j+1}]$
\begin{equation}
|\partial_{0t_{j+1}}^{\alpha}u-\Delta_{0t_{j+1}}^\alpha
u|=\mathcal{O}(\tau^{3-\alpha}).
 \label{ur0.996}
\end{equation}
\end{lemma}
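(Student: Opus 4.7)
My plan is to subtract the right-hand side of (\ref{ur0.994}) from the exact expression (\ref{ur0.99}). Writing $g_s(\eta):=u(\eta)-\Pi_{2,s}u(\eta)$, the truncation error becomes
$$\partial_{0t_{j+1}}^\alpha u-\Delta_{0t_{j+1}}^\alpha u=\frac{1}{\Gamma(1-\alpha)}\left[\int_0^{t_2}\frac{g_1'(\eta)}{(t_{j+1}-\eta)^\alpha}\,d\eta+\sum_{s=2}^{j}\int_{t_s}^{t_{s+1}}\frac{g_s'(\eta)}{(t_{j+1}-\eta)^\alpha}\,d\eta\right].$$
The overall strategy is to integrate by parts in each subintegral so as to move the derivative off $g_s$ onto the weakly singular kernel, and then insert the pointwise bound (\ref{ur0.992}), with a sharpened treatment of the subinterval adjacent to $t_{j+1}$.

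The integration-by-parts step is clean because, by construction, $g_s$ vanishes at the three interpolation nodes $t_{s-1},t_s,t_{s+1}$, which include both endpoints of every subinterval above (for $s=1$ these are $t_0$ and $t_2$; for $s\geq 2$, $t_s$ and $t_{s+1}$). The boundary contributions therefore vanish---trivially at interior grid nodes, and as a limit at $\eta=t_{j+1}$ since the simple zero of $g_s$ there dominates the $(t_{j+1}-\eta)^{-\alpha}$ blow-up (as $\alpha<1$). What remains is
$$-\frac{\alpha}{\Gamma(1-\alpha)}\left[\int_0^{t_2}\frac{g_1(\eta)}{(t_{j+1}-\eta)^{\alpha+1}}\,d\eta+\sum_{s=2}^{j}\int_{t_s}^{t_{s+1}}\frac{g_s(\eta)}{(t_{j+1}-\eta)^{\alpha+1}}\,d\eta\right].$$
For the ``interior'' subintervals ($s\leq j-1$ and, when $j\geq 2$, the first integral) the crude bound $|g_s(\eta)|\leq C\tau^3\max|u'''|$ from (\ref{ur0.992}) is enough: the kernel integrals telescope,
$$\sum_{s=2}^{j-1}\int_{t_s}^{t_{s+1}}\alpha(t_{j+1}-\eta)^{-\alpha-1}d\eta=(t_{j+1}-t_j)^{-\alpha}-(t_{j+1}-t_2)^{-\alpha}\leq\tau^{-\alpha},$$
so this collective contribution is $\mathcal{O}(\tau^{3-\alpha})$, and the first-integral term is handled by the same argument using $t_{j+1}-t_2\geq\tau$.

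The hard part will be the boundary subinterval $s=j$ (and, when $j=1$, the whole first integral), where the post-IBP kernel $(t_{j+1}-\eta)^{-\alpha-1}$ is not integrable against a mere $\mathcal{O}(\tau^3)$ bound. The fix is to keep the factor $(\eta-t_{j+1})$ from $\omega_s$ in (\ref{ur0.992}) rather than bounding it by $\tau$: for instance on $[t_j,t_{j+1}]$,
$$|g_j(\eta)|\leq \tfrac{1}{6}|(\eta-t_{j-1})(\eta-t_j)|\,(t_{j+1}-\eta)\max|u'''|\leq C\tau^2(t_{j+1}-\eta)\max|u'''|,$$
and analogously for the first integral when $j=1$. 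This trades one power of $(t_{j+1}-\eta)$ for one power of $\tau$, reducing the effective kernel to $(t_{j+1}-\eta)^{-\alpha}$, which integrates over an $\mathcal{O}(\tau)$-long subinterval to give a factor $\tau^{1-\alpha}/(1-\alpha)$. Collecting the interior and boundary contributions then yields the claimed $\mathcal{O}(\tau^{3-\alpha})$ estimate.
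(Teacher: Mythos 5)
Your proposal is correct and takes essentially the same route as the paper's proof: split off the first interval $[0,t_2]$ from the sum over $[t_s,t_{s+1}]$, integrate by parts so the derivative falls on the kernel (the interpolation error vanishes at the subinterval endpoints and its simple zero at $t_{j+1}$ kills the boundary term), bound the interior subintervals by the crude $\mathcal{O}(\tau^3)$ remainder against the telescoping integral of $(t_{j+1}-\eta)^{-\alpha-1}$, and on the subinterval touching $t_{j+1}$ (the whole of $[0,t_2]$ when $j=1$) retain the factor $(t_{j+1}-\eta)$ from the remainder to reduce the kernel to the integrable $(t_{j+1}-\eta)^{-\alpha}$. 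The paper does exactly this, writing the split as $R_0^2+R_2^{j+1}$ and tracking explicit constants.
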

\begin{proof} Let
$\partial_{0t_{j+1}}^{\alpha}u-\Delta_{0t_{j+1}}^\alpha
u=R_{0}^{2}+R_{2}^{j+1}$, where
$$
R_{0}^{2}=\frac{1}{\Gamma(1-\alpha)}\int\limits_{0}^{t_{2}}\frac{u'(\eta)d\eta}{(t_{j+1}-\eta)^{\alpha}}-
\frac{1}{\Gamma(1-\alpha)}\int\limits_{0}^{t_{2}}\frac{\left({\Pi}_{2,1}u(\eta)\right)'d\eta}{(t_{j+1}-\eta)^{\alpha}}
$$
$$
=\frac{1}{\Gamma(1-\alpha)}\int\limits_{0}^{t_{2}}\frac{\left(u(\eta)-{\Pi}_{2,1}u(\eta)\right)'d\eta}{(t_{j+1}-\eta)^{\alpha}}=
-\frac{\alpha}{\Gamma(1-\alpha)}\int\limits_{0}^{t_{2}}\frac{\left(u(\eta)-{\Pi}_{2,1}u(\eta)\right)d\eta}{(t_{j+1}-\eta)^{\alpha+1}}
$$
$$
=-\frac{\alpha}{6\Gamma(1-\alpha)}\int\limits_{0}^{t_{2}}{u'''(\bar\xi_1)\eta(\eta-t_1)(\eta-t_2)(t_{j+1}-\eta)^{-\alpha-1}}d\eta,
$$
$$
R_{2}^{j+1}=\frac{1}{\Gamma(1-\alpha)}\sum\limits_{s=2}^{j}\int\limits_{t_{s}}^{t_{s+1}}\frac{u'(\eta)d\eta}{(t_{j+1}-\eta)^{\alpha}}-
\frac{1}{\Gamma(1-\alpha)}\sum\limits_{s=2}^{j}\int\limits_{t_{s}}^{t_{s+1}}\frac{\left({\Pi}_{2,s}u(\eta)\right)'d\eta}{(t_{j+1}-\eta)^{\alpha}}
$$
$$
=\frac{1}{\Gamma(1-\alpha)}\sum\limits_{s=2}^{j}\int\limits_{t_{s}}^{t_{s+1}}\left(u(\eta)-{\Pi}_{2,s}u(\eta)\right)'{(t_{j+1}-\eta)^{-\alpha}}d\eta
$$
$$
=-\frac{\alpha}{\Gamma(1-\alpha)}\sum\limits_{s=2}^{j}\int\limits_{t_{s}}^{t_{s+1}}\left(u(\eta)-{\Pi}_{2,s}u(\eta)\right){(t_{j+1}-\eta)^{-\alpha-1}}d\eta
$$
$$
=-\frac{\alpha}{6\Gamma(1-\alpha)}\sum\limits_{s=2}^{j}\int\limits_{t_{s}}^{t_{s+1}}u'''(\bar\xi_s)(\eta-t_{s-1})(\eta-t_{s})(\eta-t_{s+1}){(t_{j+1}-\eta)^{-\alpha-1}}d\eta,
$$

Next we estimate the errors $R_{0}^{2}$ and $R_{2}^{j+1}$:

For $j=1$ we have
$$
\left|R_{0}^{2}\right|
=\frac{\alpha}{6\Gamma(1-\alpha)}\left|\int\limits_{0}^{t_{2}}{u'''(\bar\xi_1)\eta(\eta-t_1)(\eta-t_2)(t_{2}-\eta)^{-\alpha-1}}d\eta\right|
$$
$$
\leq\frac{\alpha M_3
\tau^2}{3\Gamma(1-\alpha)}\int\limits_{0}^{t_{2}}(t_{2}-\eta)^{-\alpha}d\eta=\frac{2^{1-\alpha}\alpha
M_3}{3\Gamma(2-\alpha)}\tau^{3-\alpha},
$$

For $j\geq2$ we have
$$
\left|R_{0}^{2}\right|
=\frac{\alpha}{6\Gamma(1-\alpha)}\left|\int\limits_{0}^{t_{2}}{u'''(\bar\xi_1)\eta(\eta-t_1)(\eta-t_2)(t_{j+1}-\eta)^{-\alpha-1}}d\eta\right|
$$
$$
\leq\frac{2\sqrt{3}\alpha M_3
\tau^3}{54\Gamma(1-\alpha)}\int\limits_{0}^{t_{2}}{(t_{j+1}-\eta)^{-\alpha-1}}d\eta=\frac{\sqrt{3}
M_3
\tau^{3-\alpha}}{27\Gamma(1-\alpha)}\left((j-1)^{-\alpha}-(j+1)^{-\alpha}\right)
\leq\frac{\sqrt{3}(1-3^{-\alpha})M_3}{27\Gamma(1-\alpha)}\tau^{3-\alpha},
$$

$$
\left|R_{2}^{j+1}\right|=\frac{\alpha}{6\Gamma(1-\alpha)}\left|\sum\limits_{s=2}^{j}\int\limits_{t_{s}}^{t_{s+1}}u'''(\bar\xi_s)(\eta-t_{s-1})(\eta-t_{s})(\eta-t_{s+1}){(t_{j+1}-\eta)^{-\alpha-1}}d\eta\right|
$$
$$
\leq\frac{2\sqrt{3}\alpha M_3
\tau^3}{54\Gamma(1-\alpha)}\sum\limits_{s=2}^{j-1}\int\limits_{t_{s}}^{t_{s+1}}{(t_{j+1}-\eta)^{-\alpha-1}}d\eta+
\frac{\alpha M_3
\tau^2}{3\Gamma(1-\alpha)}\int\limits_{t_{j}}^{t_{j+1}}{(t_{j+1}-\eta)^{-\alpha}}d\eta
$$
$$
=\frac{\sqrt{3}\alpha M_3
\tau^3}{27\Gamma(1-\alpha)}\int\limits_{t_{2}}^{t_{j}}{(t_{j+1}-\eta)^{-\alpha-1}}d\eta+
\frac{\alpha M_3
\tau^2}{3\Gamma(1-\alpha)}\int\limits_{t_{j}}^{t_{j+1}}{(t_{j+1}-\eta)^{-\alpha}}d\eta
$$
$$
=\frac{\sqrt{3} M_3
\tau^3}{27\Gamma(1-\alpha)}\left(\tau^{-\alpha}-t_{j-1}^{-\alpha}\right)+
\frac{\alpha M_3
\tau^2}{3\Gamma(1-\alpha)}\frac{\tau^{1-\alpha}}{1-\alpha}
\leq
\left(\frac{\sqrt{3}}{9}+\frac{\alpha}{(1-\alpha)}\right)\frac{M_3}{3\Gamma(1-\alpha)}\tau^{3-\alpha}.
$$
\end{proof}

\subsection{Fundamental features of the new L2 fractional numerical differentiation formula.}

\begin{lemma}\label{lm_pr_1} For all $\alpha\in(0,1)$ and $s=1, 2, 3, \ldots$
\begin{equation}
\frac{1-\alpha}{(s+1)^\alpha}<a_s<\frac{1-\alpha}{s^\alpha},
 \label{url2_1}
\end{equation}
\begin{equation}
\frac{\alpha(1-\alpha)}{(s+2)^{\alpha+1}}<a_s-a_{s+1}<\frac{\alpha(1-\alpha)}{s^{\alpha+1}},
 \label{url2_2}
\end{equation}
\begin{equation}
\frac{\alpha(1-\alpha)}{12(s+1)^{\alpha+1}}<b_{s}<\frac{\alpha(1-\alpha)}{12s^{\alpha+1}},
 \label{url2_3}
\end{equation}
\end{lemma}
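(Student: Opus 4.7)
The plan is to express each quantity as an integral involving a monotone power of the real variable, then read off both bounds from the monotonicity. No delicate estimates are needed: everything reduces to a textbook quadrature-error computation.

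For the first inequality I write
$$
a_s = (s+1)^{1-\alpha} - s^{1-\alpha} = (1-\alpha)\int_s^{s+1} x^{-\alpha}\, dx.
$$
Because $\alpha \in (0,1)$ the function $x^{-\alpha}$ is strictly decreasing on $[s,s+1]$ for $s \geq 1$, so the integrand lies strictly between $(s+1)^{-\alpha}$ and $s^{-\alpha}$; multiplying by $1-\alpha$ gives \eqref{url2_1}.

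For the second inequality I apply the same trick twice. Set $g(y) = \int_y^{y+1}(1-\alpha)x^{-\alpha}\,dx$ so that $a_s - a_{s+1} = g(s) - g(s+1) = -\int_s^{s+1} g'(y)\,dy$. A direct differentiation yields $g'(y) = (1-\alpha)[(y+1)^{-\alpha} - y^{-\alpha}]$, and another application of the fundamental theorem gives
$$
a_s - a_{s+1} = \alpha(1-\alpha)\int_s^{s+1}\!\int_y^{y+1} z^{-\alpha-1}\,dz\,dy.
$$
The inner variable $z$ ranges over $[s,s+2]$, and $z^{-\alpha-1}$ is decreasing, so the double integral (whose domain has unit area) is strictly between $(s+2)^{-\alpha-1}$ and $s^{-\alpha-1}$; this gives \eqref{url2_2}.

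The third inequality is the most interesting step. I would observe that $b_l$ is exactly the classical trapezoidal-rule error for $f(x) = x^{1-\alpha}$ on $[l,l+1]$:
$$
b_l = \int_l^{l+1} x^{1-\alpha}\,dx - \tfrac{1}{2}\bigl[l^{1-\alpha} + (l+1)^{1-\alpha}\bigr],
$$
where the first term comes from evaluating $\frac{(l+1)^{2-\alpha}-l^{2-\alpha}}{2-\alpha}$ as an antiderivative. The standard error representation $\int_l^{l+1}f - \tfrac{1}{2}[f(l)+f(l+1)] = -\tfrac{1}{12}f''(\xi)$ with $\xi \in (l,l+1)$, combined with $f''(x) = -\alpha(1-\alpha)x^{-\alpha-1}$, gives
$$
b_l = \frac{\alpha(1-\alpha)}{12}\,\xi^{-\alpha-1}, \qquad \xi \in (l,l+1).
$$
Monotonicity of $\xi^{-\alpha-1}$ then yields \eqref{url2_3} at once.

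The only potential friction is justifying the trapezoidal rule error formula for the endpoint $l=0$ where $f''$ is singular; since the lemma is stated for $s \geq 1$ this is not an issue, and the Peano-kernel form of the trapezoidal error can be used if one prefers a derivation that avoids invoking a mean-value point. Otherwise the three bounds are simply three applications of the same "integral representation plus monotonicity" idea, so I do not anticipate any genuine obstacle.
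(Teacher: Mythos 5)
Your proof is correct, and for the first two bounds it is essentially the paper's own argument: the paper proves \eqref{url2_1} and \eqref{url2_2} from the representations $a_{s}=(1-\alpha)\int_{0}^{1}(s+\xi)^{-\alpha}d\xi$ and $a_{s}-a_{s+1}=\alpha(1-\alpha)\int_{0}^{1}\!\int_{0}^{1}(s+\xi+\eta)^{-\alpha-1}d\xi\,d\eta$, which are your single and double integrals after the shift $x=s+\xi$, $z=s+\xi+\eta$, followed by the same monotonicity argument. The only real divergence is in \eqref{url2_3}: the paper writes down the explicit symmetric identity $b_{s}=\frac{\alpha(1-\alpha)}{2^{2-\alpha}}\int_{0}^{1}\eta\,d\eta\int_{2s+1-\eta}^{2s+1+\eta}\xi^{-\alpha-1}d\xi$ and bounds the inner integrand over $(2s,2s+2)$, whereas you recognize $b_{s}$ as the trapezoidal-rule error of $f(x)=x^{1-\alpha}$ on $[s,s+1]$ and invoke the classical error formula $-\tfrac{1}{12}f''(\xi)$, $\xi\in(s,s+1)$. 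These are two packagings of the same representation (the paper's identity is in effect a hand-made Peano-kernel formula, which is why the factor $\tfrac{1}{12}$ appears on both routes); yours buys brevity and a citation to a standard quadrature result, the paper's is self-contained and makes the strictness of the inequalities visible without appealing to the location of the mean-value point. Your remark about $l=0$ is apt but moot here, and if a referee quibbles about strictness in the mean-value form, the Peano-kernel version you mention (positive kernel, strictly monotone $f''$) settles it.
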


\begin{proof} The validity of Lemma \ref{lm_pr_1} follows from the following
equalities:
$$
a_{s}=(1-\alpha)\int\limits_{0}^{1}\frac{d\xi}{(s+\xi)^{\alpha}},
$$
$$
a_{s}-a_{s+1}=\alpha(1-\alpha)\int\limits_{0}^{1}d\eta\int\limits_{0}^{1}\frac{d\xi}{(s+\xi+\eta)^{\alpha+1}},
$$
$$
b_{s}=\frac{\alpha(1-\alpha)}{2^{2-\alpha}}\int\limits_{0}^{1}\eta
d\eta\int\limits_{2s+1-\eta}^{2s+1+\eta}\frac{d\xi}{\xi^{\alpha+1}}.
$$
\end{proof}

For $j=1$ we have
$$
c_0^{(\alpha)}=\frac{2+\alpha}{2^{\alpha}(2-\alpha)}, \quad
c_1^{(\alpha)}=\frac{2-3\alpha}{2^{\alpha}(2-\alpha)}, \quad
c_0^{(\alpha)}+3c_1^{(\alpha)}=\frac{2^{3-\alpha}(1-\alpha)}{2-\alpha}>0.
$$

For $j\geq2$, the next lemma shows properties of the coefficient
$c_s^{(\alpha)}$ defined in (\ref{ur0.995.2}) and (\ref{ur0.995.3})

\begin{lemma} For any $\alpha\in(0,1)$ and $c_s^{(\alpha)}$
($0\leq s\leq j$, $j\geq 2$) the following inequalities are valid
\begin{equation}
\frac{11}{16}\cdot\frac{1-\alpha}{(j+1)^\alpha} < c_j^{(\alpha)} < \frac{1-\alpha}{j^\alpha},
\label{url2_4}
\end{equation}
\begin{equation}
c_0^{(\alpha)}>c_2^{(\alpha)}>c_3^{(\alpha)}>\ldots>c_{j-2}^{(\alpha)}>c_{j-1}^{(\alpha)}>c_j^{(\alpha)},
\label{url2_5}
\end{equation}
\begin{equation}
c_0^{(\alpha)}+3c_1^{(\alpha)}-4c_2^{(\alpha)}>0. \label{url2_6}
\end{equation}
\end{lemma}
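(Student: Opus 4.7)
The plan is to prove the three inequalities (\ref{url2_4})--(\ref{url2_6}) in turn, using the explicit formulas (\ref{ur0.995.2})--(\ref{ur0.995.3}) for $c_s^{(\alpha)}$ together with the bounds on $a_s^{(\alpha)}$ and $b_s^{(\alpha)}$ provided by Lemma~\ref{lm_pr_1}.

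For (\ref{url2_4}), the upper bound is immediate from $c_j^{(\alpha)} = a_j^{(\alpha)} - b_j^{(\alpha)} - b_{j-1}^{(\alpha)} < a_j^{(\alpha)} < (1-\alpha)/j^\alpha$, using (\ref{url2_1}) and positivity of $b_s^{(\alpha)}$. For the lower bound, combining $a_j^{(\alpha)} > (1-\alpha)/(j+1)^\alpha$ with the upper estimates on $b_j^{(\alpha)}, b_{j-1}^{(\alpha)}$ from (\ref{url2_3}) reduces the claim to the scalar inequality
\begin{equation*}
\frac{\alpha (j+1)^\alpha}{12}\Bigl(\frac{1}{j^{\alpha+1}} + \frac{1}{(j-1)^{\alpha+1}}\Bigr) < \frac{5}{16},
\end{equation*}
whose left side is monotonically decreasing in $j \geq 2$, so it suffices to verify the case $j=2$, i.e.\ $\frac{\alpha \cdot 3^\alpha}{12}(1 + 2^{-\alpha-1}) < \frac{5}{16}$ for $\alpha \in (0,1)$ by elementary single-variable analysis (the bound becomes tight only in the formal limit $\alpha \to 1$, where both sides equal $5/16$).

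For (\ref{url2_5}), the key observation is that for $1 \leq s \leq j-2$ the coefficient admits the compact representation
\begin{equation*}
c_s^{(\alpha)} = \tfrac{1}{2}\,\delta^2[x^{1-\alpha}](s) + \tfrac{1}{2-\alpha}\,\delta^2[x^{2-\alpha}](s),
\end{equation*}
with $\delta^2 f(s) := f(s+1) - 2 f(s) + f(s-1)$, obtained by direct substitution of the definitions of $a_s^{(\alpha)}$ and $b_s^{(\alpha)}$ and collapsing telescoping terms. Applying the Taylor identity $\delta^2 f(s) = \int_0^1 (1-u)[f''(s+u)+f''(s-u)]\,du$ to both terms yields, for $s \geq 2$,
\begin{equation*}
c_s^{(\alpha)} = (1-\alpha)\int_0^1 (1-u)\bigl[F_\alpha(s+u) + F_\alpha(s-u)\bigr]\,du, \qquad F_\alpha(x) := x^{-\alpha} - \tfrac{\alpha}{2}\,x^{-\alpha-1}.
\end{equation*}
The monotonicity $c_s^{(\alpha)} > c_{s+1}^{(\alpha)}$ for $2 \leq s \leq j-3$ then reduces to $F_\alpha' < 0$ on $[1,\infty)$, which holds because $F_\alpha'(x) = \alpha x^{-\alpha-2}\bigl((\alpha+1)/2 - x\bigr)$ and $(\alpha+1)/2 < 1$ on $\alpha \in (0,1)$. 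The case $s=0$, i.e.\ $c_0^{(\alpha)} > c_2^{(\alpha)}$, is handled by direct subtraction using $a_0^{(\alpha)} = 1$, $a_2^{(\alpha)} < 1$, and the explicit formulas for the $b$'s. The boundary cases $s = j-2$ and $s = j-1$, where $c_{j-1}^{(\alpha)}$ or $c_j^{(\alpha)}$ carries the modified formula from (\ref{ur0.995.3}), require pairing the positive lower bound (\ref{url2_2}) on $a_s^{(\alpha)} - a_{s+1}^{(\alpha)}$ against the additional $\pm b_j^{(\alpha)},\,\pm b_{j-2}^{(\alpha)}$ contributions via (\ref{url2_3}).

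For (\ref{url2_6}), substitution produces $c_0^{(\alpha)} + 3 c_1^{(\alpha)} - 4 c_2^{(\alpha)} = A(\alpha) + B$, where $A(\alpha) = -2 + 7 \cdot 2^{1-\alpha} - 4 \cdot 3^{1-\alpha}$ and $B$ is a specific linear combination of $b_0^{(\alpha)}, b_1^{(\alpha)}, b_2^{(\alpha)}$ whose coefficients depend on whether $j = 2$ or $j \geq 3$. One verifies $A(\alpha) > 0$ on $(0,1)$ from $A(0) = 0$, $A(1) = 1$, and convexity of $A$ on $[0,1]$, and then estimates $B$ using the explicit forms of the $b$'s (or the bounds (\ref{url2_3})) to conclude positivity. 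The main obstacle throughout is (\ref{url2_5}): a naive plug-in of Lemma~\ref{lm_pr_1} into $2b_s^{(\alpha)} - b_{s-1}^{(\alpha)} - b_{s+1}^{(\alpha)}$ produces contributions that near $\alpha = 1$ can overwhelm the positive $a_s^{(\alpha)} - a_{s+1}^{(\alpha)}$ term, which is precisely why the second-difference / integral representation via $F_\alpha$ is needed; its adaptation to the piecewise boundary indices is the most delicate step.
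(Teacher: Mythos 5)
The main gap is in your treatment of (\ref{url2_5}) at the boundary index $s=j-2$, i.e.\ the inequality $c_{j-2}^{(\alpha)}>c_{j-1}^{(\alpha)}$ for $j\ge 4$, where $c_{j-2}^{(\alpha)}-c_{j-1}^{(\alpha)}=a_{j-2}^{(\alpha)}-a_{j-1}^{(\alpha)}-b_{j-3}^{(\alpha)}+2b_{j-2}^{(\alpha)}-b_{j-1}^{(\alpha)}-b_{j}^{(\alpha)}$. The recipe you state for this case --- pair the lower bound (\ref{url2_2}) on $a_{j-2}^{(\alpha)}-a_{j-1}^{(\alpha)}$ against (\ref{url2_3}) bounds on the $b$'s --- is exactly the naive plug-in you yourself warn against, and it genuinely fails: for $j=4$ it gives the lower bound $\alpha(1-\alpha)\bigl[4^{-\alpha-1}+\tfrac{1}{12}3^{-\alpha-1}-\tfrac{1}{12}-\tfrac{1}{12}4^{-\alpha-1}\bigr]$, which is negative for $\alpha$ close to $1$ (about $-0.007\,\alpha(1-\alpha)$ at $\alpha=0.9$), because the term $-b_{j-3}^{(\alpha)}=-b_{1}^{(\alpha)}$ is too large. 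Nor does the qualitative statement $F_\alpha'<0$ help here, since the extra boundary term is $-b_j^{(\alpha)}<0$: you need a \emph{quantitative} lower bound on the interior-type combination that is uniform as $\alpha\to1$. Beware also that the obvious quantification of your representation, $-F_\alpha'(t)\ge\tfrac{\alpha(1-\alpha)}{2}t^{-\alpha-1}$, only yields $c_s^{(\alpha)}-c_{s+1}^{(\alpha)}\ge\tfrac{\alpha(1-\alpha)^2}{2(s+2)^{\alpha+1}}$, which cannot dominate $b_j^{(\alpha)}<\tfrac{\alpha(1-\alpha)}{12 j^{\alpha+1}}$ near $\alpha=1$. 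So the step you flag as ``the most delicate'' is precisely the one left undone, and as described it would not close.

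For comparison, the paper derives from a triple-integral representation of $a_s^{(\alpha)}-a_{s+1}^{(\alpha)}-b_{s-1}^{(\alpha)}+2b_s^{(\alpha)}-b_{s+1}^{(\alpha)}$ the uniform bound $>\tfrac{\alpha(1-\alpha)}{2(s+2)^{\alpha+1}}$ (using $\int_0^1\!\!\int_0^1\!\!\int_0^1\frac{dz_1\,dz_2\,dz_3}{1+z_1+z_2+z_3}<\tfrac12$, so the prefactor $1-\tfrac{1+\alpha}{4}\ge\tfrac12$ does not degenerate in $\alpha$), then for $s=j-2$ absorbs $b_j^{(\alpha)}<\tfrac{\alpha(1-\alpha)}{12 j^{\alpha+1}}$ to get $\tfrac{5\alpha(1-\alpha)}{12 j^{\alpha+1}}>0$, and for $s=j-1$ simply drops the helpful $+3b_j^{(\alpha)}$ and reuses the same bound; an analogous uniform-in-$\alpha$ estimate must be extracted from your $F_\alpha$ representation before the boundary cases are settled. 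Your handling of (\ref{url2_4}) is fine and essentially parallels the paper's, with a slightly sharper reduction. In (\ref{url2_6}) there is a slip: $A(\alpha)=-2+7\cdot2^{1-\alpha}-4\cdot3^{1-\alpha}$ is concave on $[0,1]$ (e.g.\ $A''(0)=14\ln^2 2-12\ln^2 3<0$), not convex, and it is concavity that makes the endpoint argument work ($A(\alpha)\ge\alpha$); with convexity the argument proves nothing. Also, for $j=3$ the combination contains $b_3^{(\alpha)}$ as well, and both the $c_0^{(\alpha)}>c_2^{(\alpha)}$ step and the estimate of $B$ are only gestured at, though these are easy to complete via monotonicity of $b_s^{(\alpha)}$.
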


\begin{proof}  For $j\geq2$ we get
$$
c_j^{(\alpha)}=a_j^{(\alpha)}-b_j^{(\alpha)}-b_{j-1}^{(\alpha)}<a_j^{(\alpha)}<
\frac{1-\alpha}{j^\alpha}.
$$
$$
c_j^{(\alpha)}=a_j^{(\alpha)}-b_j^{(\alpha)}-b_{j-1}^{(\alpha)}>
\frac{1-\alpha}{(j+1)^\alpha}-\frac{\alpha(1-\alpha)}{12j^{\alpha+1}}-\frac{\alpha(1-\alpha)}{12(j-1)^{\alpha+1}}.
$$
$$
=\frac{1-\alpha}{(j+1)^\alpha}\left(1-\frac{\alpha}{12}\cdot\left(\frac{j+1}{j}\right)^\alpha\cdot\frac{1}{j}-\frac{\alpha}{12}\cdot\left(\frac{j+1}{j-1}\right)^\alpha\cdot\frac{1}{j-1}\right)
$$
$$
>\frac{1-\alpha}{(j+1)^\alpha}\left(1-\frac{1}{12}\cdot\frac{3}{2}\cdot\frac{1}{2}-\frac{1}{12}\cdot3\right)=\frac{11}{16}\cdot\frac{1-\alpha}{(j+1)^\alpha}
$$
Inequality (\ref{url2_4}) is proved. Let us prove inequality
(\ref{url2_5}).
$$
c_0^{(\alpha)}-c_2^{(\alpha)}\geq
a_0^{(\alpha)}-a_2^{(\alpha)}+b_0^{(\alpha)}-
b_2^{(\alpha)}+b_1^{(\alpha)}-b_3^{(\alpha)}>0,
$$

For $j \geq 5$, $2\leq s\leq j-3$ we have
$$
c_{s}^{(\alpha)}-c_{s+1}^{(\alpha)}=a_{s}^{(\alpha)}-a_{s+1}^{(\alpha)}-b_{s-1}^{(\alpha)}+2b_{s}^{(\alpha)}-b_{s+1}^{(\alpha)}
$$
$$
=\frac{1}{2-\alpha}\left(-(s+2)^{2-\alpha}+3(s+1)^{2-\alpha}-3s^{2-\alpha}+(s-1)^{2-\alpha}\right)
$$
$$
-\frac{1}{2}\left(-(s+2)^{1-\alpha}+3(s+1)^{1-\alpha}-3s^{1-\alpha}+(s-1)^{1-\alpha}\right)
$$
$$
=\alpha(1-\alpha)\int\limits_{0}^{1}dz_1\int\limits_{0}^{1}dz_2\int\limits_{0}^{1}\frac{dz_3}{(s-1+z_1+z_2+z_3)^{\alpha+1}}
$$
$$
-\frac{\alpha(1-\alpha)(1+\alpha)}{2}\int\limits_{0}^{1}dz_1\int\limits_{0}^{1}dz_2\int\limits_{0}^{1}\frac{dz_3}{(s-1+z_1+z_2+z_3)^{\alpha+2}}
$$
$$
=\alpha(1-\alpha)\int\limits_{0}^{1}dz_1\int\limits_{0}^{1}dz_2\int\limits_{0}^{1}
\frac{\left(1-\frac{1+\alpha}{2}\cdot\frac{1}{s-1+z_1+z_2+z_3}\right)}{(s-1+z_1+z_2+z_3)^{\alpha+1}}dz_3
$$
$$
>\frac{\alpha(1-\alpha)}{(s+2)^{\alpha+1}}\left(1-\frac{1+\alpha}{2}\int\limits_{0}^{1}dz_1\int\limits_{0}^{1}dz_2\int\limits_{0}^{1}
\frac{dz_3}{1+z_1+z_2+z_3}\right).
$$
Since
$$
\int\limits_{0}^{1}dz_1\int\limits_{0}^{1}dz_2\int\limits_{0}^{1}
\frac{dz_3}{1+z_1+z_2+z_3}=\frac{1}{2}\left(44\ln{2}-27\ln{3}\right)<\frac{1}{2},
$$
$$
c_{s}^{(\alpha)}-c_{s+1}^{(\alpha)}>\frac{\alpha(1-\alpha)}{(s+2)^{\alpha+1}}\left(1-\frac{1+\alpha}{4}\right)>\frac{\alpha(1-\alpha)}{2(s+2)^{\alpha+1}}>0.
$$
For $j \geq 4$ we get
$$
c_{j-2}^{(\alpha)}-c_{j-1}^{(\alpha)}=a_{j-2}^{(\alpha)}-a_{j-1}^{(\alpha)}-b_{j-3}^{(\alpha)}+2b_{j-2}^{(\alpha)}-b_{j-1}^{(\alpha)}-b_{j}^{(\alpha)}
$$
$$
>\frac{\alpha(1-\alpha)}{2j^{\alpha+1}}-b_{j}^{(\alpha)}>\frac{\alpha(1-\alpha)}{2j^{\alpha+1}}-\frac{\alpha(1-\alpha)}{12j^{\alpha+1}}=\frac{5\alpha(1-\alpha)}{12j^{\alpha+1}}>0.
$$
For $j \geq 3$ we have
$$
c_{j-1}^{(\alpha)}-c_{j}^{(\alpha)}=a_{j-1}^{(\alpha)}-a_{j}^{(\alpha)}-b_{j-2}^{(\alpha)}+2b_{j-1}^{(\alpha)}+2b_{j}^{(\alpha)}
$$
$$
>a_{j-1}^{(\alpha)}-a_{j}^{(\alpha)}-b_{j-2}^{(\alpha)}+2b_{j-1}^{(\alpha)}-b_{j}^{(\alpha)}>\frac{\alpha(1-\alpha)}{2(j+1)^{\alpha+1}}>0.
$$
Inequality (\ref{url2_5}) is proved.

For $j=2$ we get
$$
c_{0}^{(\alpha)}+3c_{1}^{(\alpha)}-4c_{2}^{(\alpha)}=a_{0}^{(\alpha)}+3a_{1}^{(\alpha)}-4a_{2}^{(\alpha)}-2b_{0}^{(\alpha)}+7b_{1}^{(\alpha)}+7b_{2}^{(\alpha)}
$$
$$
>
a_{0}^{(\alpha)}-a_{1}^{(\alpha)}-2b_{0}^{(\alpha)}=3-2^{1-\alpha}-\frac{2}{2-\alpha}.
$$
Since, for any function $f(x)\in C^2[0,1]$, if $f(0)=0$, $f(1)=0$
and $f''(x)<0$ for all $x\in(0,1)$ then $f(x)>0$ for all
$x\in(0,1)$, we have
$$
c_{0}^{(\alpha)}+3c_{1}^{(\alpha)}-4c_{2}^{(\alpha)}>f(\alpha) = 3-2^{1-\alpha}-\frac{2}{2-\alpha}>0
\quad \text{for all}\quad \alpha\in(0,1).
$$
For $j=3$ we get
$$
c_{0}^{(\alpha)}+3c_{1}^{(\alpha)}-4c_{2}^{(\alpha)}=a_{0}^{(\alpha)}+3a_{1}^{(\alpha)}-4a_{2}^{(\alpha)}-2b_{0}^{(\alpha)}+7b_{1}^{(\alpha)}-4b_{2}^{(\alpha)}-4b_{3}^{(\alpha)}
$$
$$
>a_{0}^{(\alpha)}-a_{1}^{(\alpha)}-2b_{0}^{(\alpha)}+4(a_{1}^{(\alpha)}-a_{2}^{(\alpha)}-b_{3}^{(\alpha)})>3-2^{1-\alpha}-\frac{2}{2-\alpha}>0.
$$
\end{proof}

\begin{lemma} \label{lm_ineq} For any real constants $c_0, c_1$ such that
$c_0\geq\max\{c_1,-3c_1\}$, and $\{v_j\}_{j=0}^{j=M}$ the following
inequality holds
\begin{equation}
v_{j+1}\left(c_0v_{j+1}-(c_0-c_1)v_{j}-c_1v_{j-1}\right)\geq
E_{j+1}-E_{j}, \quad j=1, \ldots, M-1, \label{url5}
\end{equation}
where
$$
E_{j}=\left(\frac{1}{2}\sqrt{\frac{c_0-c_1}{2}}+\frac{1}{2}\sqrt{\frac{c_0+3c_1}{2}}\right)^2v_{j}^{2}
+\left(\sqrt{\frac{c_0-c_1}{2}}v_{j}-\left(\frac{1}{2}\sqrt{\frac{c_0-c_1}{2}}+\frac{1}{2}\sqrt{\frac{c_0+3c_1}{2}}\right)v_{j-1}\right)^2,
\quad j=1, 2, \ldots, M.
$$
\end{lemma}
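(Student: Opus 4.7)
The plan is to reduce the inequality to a single perfect square by substituting clean algebraic names for the nested surds that appear in $E_j$. Set
$$A=\sqrt{\tfrac{c_0-c_1}{2}},\qquad B=\sqrt{\tfrac{c_0+3c_1}{2}},\qquad p=\tfrac{A+B}{2}.$$
The hypothesis $c_0\ge\max\{c_1,-3c_1\}$ is exactly what makes both radicands non-negative, so $A,B,p\in\mathbb{R}$ are well defined. Inverting, $c_0-c_1=2A^2$, $c_0+3c_1=2B^2$, hence
$$c_0=\tfrac{3A^2+B^2}{2},\qquad c_1=\tfrac{B^2-A^2}{2}.$$
In this notation the definition of $E_j$ simplifies to $E_j=p^2 v_j^2+(Av_j-p v_{j-1})^2$.

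Next I would expand both sides of \eqref{url5} as quadratic forms in the three unknowns $v_{j-1},v_j,v_{j+1}$. On the left,
$$v_{j+1}\bigl(c_0 v_{j+1}-(c_0-c_1)v_j-c_1 v_{j-1}\bigr)=\tfrac{3A^2+B^2}{2}v_{j+1}^2-2A^2 v_{j+1}v_j-\tfrac{B^2-A^2}{2}v_{j+1}v_{j-1}.$$
On the right,
$$E_{j+1}-E_j=(A^2+p^2)v_{j+1}^2-2Ap\,v_{j+1}v_j-A^2 v_j^2+2Ap\,v_j v_{j-1}-p^2 v_{j-1}^2,$$
which follows directly from the closed form of $E_j$ above.

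The key step is then to subtract the two expressions and collect coefficients. I expect to obtain, using $p=(A+B)/2$ and $A^2+p^2=(5A^2+2AB+B^2)/4$, the equality
$$v_{j+1}\bigl(c_0 v_{j+1}-(c_0-c_1)v_j-c_1 v_{j-1}\bigr)-(E_{j+1}-E_j)=\left(\tfrac{B-A}{2}v_{j+1}+Av_j-p\,v_{j-1}\right)^{2},$$
which is manifestly non-negative and therefore proves \eqref{url5}. The identification of the right-hand side as a perfect square is forced by matching the cross terms $v_{j+1}v_j$ and $v_{j+1}v_{j-1}$ with the ``old'' square $(Av_j-pv_{j-1})^2$ inherited from $E_j$.

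The only real obstacle is bookkeeping: one must verify that the coefficient $(A-B)^2/4$ of $v_{j+1}^2$, the coefficient $A(B-A)$ of $v_{j+1}v_j$, and the coefficient $-(B-A)p$ of $v_{j+1}v_{j-1}$ line up exactly with the cross square $(\tfrac{B-A}{2}v_{j+1})\cdot 2\cdot(Av_j-p v_{j-1})$ and the $A^2,{-2Ap},p^2$ terms inherited from $(Av_j-pv_{j-1})^{2}$. Once that algebra is checked, the lemma is immediate.
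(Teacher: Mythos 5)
Your proposal is correct and coincides with the paper's own argument: the paper proves the lemma by exhibiting exactly the same perfect-square identity, namely that the left-hand side minus $E_{j+1}-E_{j}$ equals $\left(\left(\tfrac{1}{2}\sqrt{\tfrac{c_0-c_1}{2}}-\tfrac{1}{2}\sqrt{\tfrac{c_0+3c_1}{2}}\right)v_{j+1}-\sqrt{\tfrac{c_0-c_1}{2}}\,v_{j}+\left(\tfrac{1}{2}\sqrt{\tfrac{c_0-c_1}{2}}+\tfrac{1}{2}\sqrt{\tfrac{c_0+3c_1}{2}}\right)v_{j-1}\right)^2$, which is your square with the opposite sign inside. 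Your coefficient bookkeeping (the $(B-A)^2/4$, $A(B-A)$, $-(B-A)p$, $A^2$, $-2Ap$, $p^2$ terms) checks out, so the argument is complete.
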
 

\begin{proof} The proof of Lemma \ref{lm_ineq}  immediately follows from the next
equality
$$
v_{j+1}\left(c_0v_{j+1}-(c_0-c_1)v_{j}-c_1v_{j-1}\right)-
E_{j+1}+E_{j}
$$
$$
=\left(\left(\frac{1}{2}\sqrt{\frac{c_0-c_1}{2}}-\frac{1}{2}\sqrt{\frac{c_0+3c_1}{2}}\right)v_{j+1}-\sqrt{\frac{c_0-c_1}{2}}v_{j}+\left(\frac{1}{2}\sqrt{\frac{c_0-c_1}{2}}+\frac{1}{2}\sqrt{\frac{c_0+3c_1}{2}}\right)v_{j-1}\right)^2\geq0.
$$
\end{proof}

\begin{lemma} \cite{AlikhanovJCP} If
	$g_{j}^{j+1}\geq g_{j-1}^{j+1}\geq \ldots\geq g_{0}^{j+1}>0$, $j=0,1,\ldots,M-1$
	then for any function $v(t)$ defined on the grid $\overline
	\omega_{\tau}$ one has the inequalities
	\begin{equation}\label{ur04}
		v^{j+1}{_g}\Delta_{0t_{j+1}}^{\alpha}v\geq
		\frac{1}{2}{_g}\Delta_{0t_{j+1}}^{\alpha}(v^2)+\frac{1}{2g^{j+1}_j}\left({_g}\Delta_{0t_{j+1}}^{\alpha}v\right)^2,
	\end{equation}
	where
	$$
	{_g}\Delta_{0t_{j+1}}^{\alpha}y_i=\sum\limits_{s=0}^{j}\left(y_i^{s+1}-y_i^s\right)g_{s}^{j+1}, 
	$$
	is a difference analog of the Caputo fractional derivative
	of the order $\alpha$ ($0<\alpha<1$).
\end{lemma}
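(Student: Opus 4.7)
The plan is to reduce the inequality to a sum-of-squares and then Cauchy--Schwarz by exploiting the substitution $u^s := v^{j+1}-v^s$, which makes the key factors telescope cleanly.

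First I would observe the algebraic identity
\begin{equation*}
(v^{s+1}-v^s)\bigl(2v^{j+1}-v^{s+1}-v^s\bigr)=(u^s-u^{s+1})(u^s+u^{s+1})=(u^s)^2-(u^{s+1})^2,
\end{equation*}
with $u^{j+1}=0$. Writing $2v^{j+1}(v^{s+1}-v^s)-((v^{s+1})^2-(v^s)^2)=(v^{s+1}-v^s)(2v^{j+1}-v^{s+1}-v^s)$ and summing against the weights $g_s^{j+1}$ gives
\begin{equation*}
2v^{j+1}\,{_g}\Delta_{0t_{j+1}}^{\alpha}v-{_g}\Delta_{0t_{j+1}}^{\alpha}(v^2)=\sum_{s=0}^{j}g_s^{j+1}\bigl[(u^s)^2-(u^{s+1})^2\bigr].
\end{equation*}

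Next I would apply Abel summation (a.k.a. summation by parts). Set $\nu_0:=g_0^{j+1}$ and $\nu_s:=g_s^{j+1}-g_{s-1}^{j+1}$ for $s=1,\dots,j$; by the monotonicity hypothesis each $\nu_s\geq 0$, and a telescoping sum yields $\sum_{s=0}^{j}\nu_s=g_j^{j+1}$. Using $u^{j+1}=0$, the right-hand side above collapses to
\begin{equation*}
\sum_{s=0}^{j}g_s^{j+1}\bigl[(u^s)^2-(u^{s+1})^2\bigr]=\sum_{s=0}^{j}\nu_s(u^s)^2.
\end{equation*}
The same Abel step applied to ${_g}\Delta_{0t_{j+1}}^{\alpha}v=\sum_{s=0}^{j}g_s^{j+1}(u^s-u^{s+1})$ gives the companion formula ${_g}\Delta_{0t_{j+1}}^{\alpha}v=\sum_{s=0}^{j}\nu_s u^s$.

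Finally, I would invoke the Cauchy--Schwarz inequality in the weighted $\ell^2$ space with weights $\nu_s\geq 0$:
\begin{equation*}
\Bigl({_g}\Delta_{0t_{j+1}}^{\alpha}v\Bigr)^{2}=\Bigl(\sum_{s=0}^{j}\nu_s\cdot u^s\Bigr)^{2}\leq\Bigl(\sum_{s=0}^{j}\nu_s\Bigr)\Bigl(\sum_{s=0}^{j}\nu_s(u^s)^2\Bigr)=g_j^{j+1}\bigl(2v^{j+1}\,{_g}\Delta_{0t_{j+1}}^{\alpha}v-{_g}\Delta_{0t_{j+1}}^{\alpha}(v^2)\bigr),
\end{equation*}
and dividing by $2g_j^{j+1}$ yields the claim. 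The only real obstacle is recognizing the right change of variables $u^s=v^{j+1}-v^s$; once that is in place, the monotonicity of $g_s^{j+1}$ is used in a single spot (to ensure $\nu_s\geq 0$, so that Cauchy--Schwarz applies in the weighted norm), and everything else is routine telescoping.
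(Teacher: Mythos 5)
Your proof is correct. Note that this paper does not prove the lemma at all --- it is quoted from \cite{AlikhanovJCP} --- and your argument (the substitution $u^s=v^{j+1}-v^s$ turning the weighted increments into telescoping differences of squares, Abel summation to exploit $g_s^{j+1}-g_{s-1}^{j+1}\ge 0$ and $g_0^{j+1}>0$, and weighted Cauchy--Schwarz with total weight $\sum_s \nu_s = g_j^{j+1}>0$, which also legitimizes the final division) is essentially the same energy-type argument as in that reference, so nothing needs to be added.
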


\begin{lemma}\label{lem16}  For any function $v(t)$ defined on the grid
$\overline \omega_{\tau}$ one has the inequality
\begin{equation}\label{url6}
v_{j+1}\Delta_{0t_{j+1}}^{\alpha}v\geq\frac{\tau^{-\alpha}}{\Gamma(2-\alpha)}\left(E_{j+1}-E_{j}\right)+\frac{1}{2}\bar{\Delta}_{0t_{j+1}}^{\alpha}v^2 =
\frac{\tau^{-\alpha}}{\Gamma(2-\alpha)}\left(\mathcal{E}_{j+1}-\mathcal{E}_{j}\right) - \frac{\tau^{-\alpha}}{2\Gamma(2-\alpha)}\bar c^{(\alpha)}_j v_0^2,
\end{equation}
where
$$
\bar{\Delta}_{0t_{j+1}}^{\alpha}v=\frac{\tau^{-\alpha}}{\Gamma{(2-\alpha)}}\sum\limits_{s=0}^{j}\bar{c}_{j-s}^{(\alpha)}(v_{s+1}-v_{s}),\quad
j=1, 2, \ldots, M,
$$
$$
\bar{c}_{0}^{(\alpha)}={c}_{2}^{(\alpha)},\quad
\bar{c}_{1}^{(\alpha)}={c}_{2}^{(\alpha)},\quad
\bar{c}_{s}^{(\alpha)}={c}_{s}^{(\alpha)},\quad s=2,3,\ldots,j,
$$
$$
\text{for}\quad j=1, 2, 3, \ldots, M, \quad
E_j=E_j({c}_{0}^{(\alpha)}-{c}_{2}^{(\alpha)},{c}_{1}^{(\alpha)}-{c}_{2}^{(\alpha)}),
$$
$$
\mathcal{E}_{j} = E_{j} + \frac{1}{2}\sum\limits_{s=0}^{j-1}\bar{c}_{j-1-s}^{(\alpha)}v_{s+1}^2.
$$
\end{lemma}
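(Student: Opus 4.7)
The strategy is to decompose $\Delta_{0t_{j+1}}^{\alpha}v$ into a ``tail'' with non-increasing coefficients $\bar{c}$ (to which the cited monotone-coefficient lemma applies) and a three-node correction in $v_{j+1},v_j,v_{j-1}$ that fits the template of Lemma~\ref{lm_ineq}. Since $\bar c_0^{(\alpha)}=\bar c_1^{(\alpha)}=c_2^{(\alpha)}$ and $\bar c_s^{(\alpha)}=c_s^{(\alpha)}$ for $s\ge 2$, a direct term-by-term comparison gives
$$
\Delta_{0t_{j+1}}^{\alpha}v-\bar\Delta_{0t_{j+1}}^{\alpha}v=\frac{\tau^{-\alpha}}{\Gamma(2-\alpha)}\bigl[(c_0^{(\alpha)}-c_2^{(\alpha)})(v_{j+1}-v_j)+(c_1^{(\alpha)}-c_2^{(\alpha)})(v_j-v_{j-1})\bigr],
$$
and, after multiplying by $v_{j+1}$, the bracket rearranges into $\tilde c_0 v_{j+1}-(\tilde c_0-\tilde c_1)v_j-\tilde c_1 v_{j-1}$ with $\tilde c_0=c_0^{(\alpha)}-c_2^{(\alpha)}$ and $\tilde c_1=c_1^{(\alpha)}-c_2^{(\alpha)}$, which is exactly the template of Lemma~\ref{lm_ineq}.

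To invoke Lemma~\ref{lm_ineq} I would verify $\tilde c_0\ge\max\{\tilde c_1,-3\tilde c_1\}$. The bound $\tilde c_0\ge-3\tilde c_1$ reads $c_0^{(\alpha)}+3c_1^{(\alpha)}-4c_2^{(\alpha)}>0$ and is precisely \eqref{url2_6}; the bound $\tilde c_0\ge\tilde c_1$ reduces to $c_0^{(\alpha)}\ge c_1^{(\alpha)}$, which follows from \eqref{ur0.995.2}--\eqref{ur0.995.3} because the leading piece $a_0^{(\alpha)}-a_1^{(\alpha)}=2-2^{1-\alpha}$ is positive and dominates the remaining $b$-contributions once the monotonicity of $b_s^{(\alpha)}$ from \eqref{url2_3} is used. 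Lemma~\ref{lm_ineq} then yields
$$
v_{j+1}\bigl(\Delta_{0t_{j+1}}^{\alpha}v-\bar\Delta_{0t_{j+1}}^{\alpha}v\bigr)\ge\frac{\tau^{-\alpha}}{\Gamma(2-\alpha)}(E_{j+1}-E_j).
$$
For the leftover $v_{j+1}\bar\Delta_{0t_{j+1}}^{\alpha}v$ I would apply the cited ${_g}\Delta$ lemma with $g_s^{j+1}=\tau^{-\alpha}\bar c_{j-s}^{(\alpha)}/\Gamma(2-\alpha)$; its monotonicity hypothesis reduces to $c_2^{(\alpha)}\ge c_3^{(\alpha)}\ge\cdots\ge c_j^{(\alpha)}>0$, immediate from \eqref{url2_5} and \eqref{url2_4}. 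Dropping the nonnegative remainder $(\bar\Delta_{0t_{j+1}}^{\alpha}v)^2/(2g_j^{j+1})$ leaves $v_{j+1}\bar\Delta_{0t_{j+1}}^{\alpha}v\ge\tfrac12\bar\Delta_{0t_{j+1}}^{\alpha}v^2$; adding this to the previous estimate gives the first form of \eqref{url6}.

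The rewriting in terms of $\mathcal{E}_{j+1}-\mathcal{E}_j$ is then purely algebraic. Abel summation converts
$$
\bar\Delta_{0t_{j+1}}^{\alpha}v^2=\frac{\tau^{-\alpha}}{\Gamma(2-\alpha)}\sum_{s=0}^{j}\bar c_{j-s}^{(\alpha)}(v_{s+1}^2-v_s^2)
$$
into $\frac{\tau^{-\alpha}}{\Gamma(2-\alpha)}\bigl[\bar c_0^{(\alpha)}v_{j+1}^2+\sum_{s=0}^{j-1}(\bar c_{j-s}^{(\alpha)}-\bar c_{j-1-s}^{(\alpha)})v_{s+1}^2-\bar c_j^{(\alpha)}v_0^2\bigr]$, and the definition of $\mathcal{E}_j$ shows that the first two summands equal exactly $2[(\mathcal{E}_{j+1}-E_{j+1})-(\mathcal{E}_j-E_j)]$; substituting back delivers the stated equality. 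The main obstacle is spotting the correct decomposition at the outset: the choice $\bar c_0^{(\alpha)}=\bar c_1^{(\alpha)}=c_2^{(\alpha)}$ is the unique one that simultaneously makes the $\bar c$-sequence non-increasing (so the ${_g}\Delta$ lemma applies) and isolates precisely the three-term combination $(v_{j+1},v_j,v_{j-1})$ required by Lemma~\ref{lm_ineq}; once this is in place the rest of the proof is an assembly of tools already developed.
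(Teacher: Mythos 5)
Your proposal is correct and follows essentially the same route as the paper: the same splitting of $\Delta_{0t_{j+1}}^{\alpha}v$ into the three-term combination handled by Lemma~\ref{lm_ineq} with constants $c_0^{(\alpha)}-c_2^{(\alpha)}$, $c_1^{(\alpha)}-c_2^{(\alpha)}$ plus the modified operator $\bar{\Delta}_{0t_{j+1}}^{\alpha}$ treated by the cited monotone-coefficient lemma, followed by the same Abel-type rearrangement giving the $\mathcal{E}_j$ form. The only difference is cosmetic: you verify the hypothesis $c_0^{(\alpha)}\geq c_1^{(\alpha)}$ explicitly (the paper leaves it implicit), and the paper writes out the case $j=1$ separately, which in your uniform treatment is the same algebra.
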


\begin{proof} For $j=1$ we have
$$
v_{2}\Delta_{0t_{2}}^{\alpha}v=\frac{\tau^{-\alpha}}{\Gamma(2-\alpha)}v_2\left(c_{0}^{(\alpha)}(v_2-v_1)+c_{1}^{(\alpha)}(v_1-v_0)\right)
$$
$$
=\frac{\tau^{-\alpha}}{\Gamma(2-\alpha)}v_2\left((c_{0}^{(\alpha)}-c_{2}^{(\alpha)})v_2-(c_{0}^{(\alpha)}-c_{1}^{(\alpha)})v_1-(c_{1}^{(\alpha)}-c_{2}^{(\alpha)})v_0\right)
$$
$$
+
\frac{\tau^{-\alpha}}{\Gamma(2-\alpha)}c_{2}^{(\alpha)}\left(v_2^2-v_2v_0\right)
$$
$$
\geq\frac{\tau^{-\alpha}}{\Gamma(2-\alpha)}\left(E_{2}-E_{1}\right)+\frac{\tau^{-\alpha}}{2\Gamma(2-\alpha)}c_{2}^{(\alpha)}\left(v_2^2-v_0^2\right)
$$
$$
=\frac{\tau^{-\alpha}}{\Gamma(2-\alpha)}\left(E_{2}-E_{1}\right)+\frac{1}{2}\bar{\Delta}_{0t_{2}}^{\alpha}v^2.
$$
For $j=2, 3, \ldots, M-1$ we have
$$
v_{j+1}\Delta_{0t_{j+1}}^{\alpha}v=\frac{\tau^{-\alpha}}{\Gamma(2-\alpha)}v_{j+1}\sum\limits_{s=0}^{j}{c}_{j-s}^{(\alpha)}(v_{s+1}-v_{s})
$$
$$
\frac{\tau^{-\alpha}}{\Gamma(2-\alpha)}v_{j+1}\left((c_{0}^{(\alpha)}-c_{2}^{(\alpha)})(v_{j+1}-v_{j})+(c_{1}^{(\alpha)}-c_{2}^{(\alpha)})(v_{j}-v_{j-1})\right)+v_{j+1}\bar{\Delta}_{0t_{j+1}}^{\alpha}v
$$
$$
=\frac{\tau^{-\alpha}}{\Gamma(2-\alpha)}v_{j+1}\left((c_{0}^{(\alpha)}-c_{2}^{(\alpha)})v_{j+1}-(c_{0}^{(\alpha)}-c_{1}^{(\alpha)})v_{j}-(c_{1}^{(\alpha)}-c_{2}^{(\alpha)})v_{j-1}\right)+v_{j+1}\bar{\Delta}_{0t_{j+1}}^{\alpha}v
$$
$$
\geq\frac{\tau^{-\alpha}}{\Gamma(2-\alpha)}\left(E_{j+1}-E_{j}\right)+\frac{1}{2}\bar{\Delta}_{0t_{j+1}}^{\alpha}v^2.
$$
In addition, the following equality holds
$$
\bar{\Delta}_{0t_{j+1}}^{\alpha}v^2 = 
\frac{\tau^{-\alpha}}{\Gamma{(2-\alpha)}}\sum\limits_{s=0}^{j}\bar{c}_{j-s}^{(\alpha)}(v_{s+1}^2-v_{s}^2) =
\frac{\tau^{-\alpha}}{\Gamma{(2-\alpha)}}\left(
\sum\limits_{s=0}^{j}\bar{c}_{j-s}^{(\alpha)}v_{s+1}^2 - \sum\limits_{s=0}^{j-1}\bar{c}_{j-1-s}^{(\alpha)}v_{s+1}^2 -
\bar{c}_{j}^{(\alpha)}v_{0}^2\right).
$$
\end{proof}

\section{A difference scheme for the time fractional diffusion equation}

In this section for problem  (\ref{ur01})--(\ref{ur02})   a
difference scheme with the approximation order
$\mathcal{O}(h^2+\tau^{3-\alpha})$ is constructed. The stability of the
constructed difference scheme as well as its convergence in the grid
$L_2$ - norm with the rate equal to the order of the approximation
error is proved. The obtained results are supported with numerical
calculations carried out for a test example.

\subsection{Derivation  of the difference scheme}

\begin{lemma} \cite{AlikhanovJCP} For any functions  $k(x)\in \mathcal{C}_{x}^{3}$
and $v(x)\in\mathcal{C}_{x}^{4}$ the following equality holds true:
\begin{equation}\label{ur16.1}
	\left.\frac{d}{dx}\left(k(x)\frac{d}{dx}v(x)\right)\right|_{x=x_i}=\frac{k(x_{i+1/2})v(x_{i+1})-(k(x_{i+1/2})+k(x_{i-1/2}))v(x_{i})+k(x_{i-1/2})v(x_{i-1})}{h^2}+\mathcal{O}(h^2).
\end{equation}
\label{lemh2}
\end{lemma}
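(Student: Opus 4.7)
The plan is to reduce the lemma to two applications of the classical midpoint central difference. Introduce the flux $w(x)=k(x)v'(x)$ so that the left-hand side equals $w'(x_i)$, and note that under the regularity assumptions $k\in\mathcal C^3$, $v\in\mathcal C^4$ one has $w\in\mathcal C^3$ and $kv'''\in\mathcal C^1$, which is all the smoothness I will actually need.

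First I would use the second-order midpoint difference for $w$,
\[
w'(x_i)=\frac{w(x_{i+1/2})-w(x_{i-1/2})}{h}-\frac{h^2}{24}\,w'''(x_i)+\mathcal O(h^4),
\]
obtained by Taylor expanding $w(x_i\pm h/2)$ about $x_i$ and subtracting. The second step is to rewrite $w(x_{i\pm 1/2})=k(x_{i\pm 1/2})v'(x_{i\pm 1/2})$ using the analogous midpoint formula
\[
v'(x_{i+1/2})=\frac{v(x_{i+1})-v(x_i)}{h}-\frac{h^2}{24}\,v'''(x_{i+1/2})+\mathcal O(h^3),
\]
together with its symmetric counterpart at $x_{i-1/2}$. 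Multiplying by $k(x_{i\pm 1/2})$, subtracting, and dividing by $h$ gives
\[
w'(x_i)=\frac{k(x_{i+1/2})(v(x_{i+1})-v(x_i))-k(x_{i-1/2})(v(x_i)-v(x_{i-1}))}{h^2}+R,
\]
where $R$ collects the term $-\tfrac{h^2}{24}w'''(x_i)$ together with the cross contribution $-\tfrac{h}{24}\bigl[k(x_{i+1/2})v'''(x_{i+1/2})-k(x_{i-1/2})v'''(x_{i-1/2})\bigr]$ and the divided higher-order Taylor remainders.

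The main (and essentially only nontrivial) point is to check that this cross contribution is $\mathcal O(h^2)$ rather than $\mathcal O(h)$. Each value in the bracket is $\mathcal O(1)$, so a naive bound loses one order. The resolution is that $kv'''$ is $\mathcal C^1$, so its first difference across a step of length $h$ is itself $\mathcal O(h)$; together with the explicit factor $h$ in front, this upgrades the cross term to $\mathcal O(h^2)$. Similarly, $\tfrac{h^2}{24}w'''(x_i)=\mathcal O(h^2)$ using $w\in\mathcal C^3$, and the remaining divided Taylor remainders contribute $\mathcal O(h^3)$ or better. Collecting everything into a single $\mathcal O(h^2)$ term yields the claimed equality, and the hypotheses $k\in\mathcal C^3$, $v\in\mathcal C^4$ are exactly what these two smoothness reductions ($w\in\mathcal C^3$ and $kv'''\in\mathcal C^1$) demand.
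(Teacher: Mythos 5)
Your proof is correct, and since the paper does not prove this lemma itself (it is quoted from the reference [AlikhanovJCP]), your flux-form argument --- writing $w=kv'$, applying the midpoint difference to $w'$ and to $v'$ at the half-points, and upgrading the cross term $\frac{h}{24}\bigl[k(x_{i+1/2})v'''(x_{i+1/2})-k(x_{i-1/2})v'''(x_{i-1/2})\bigr]$ to $\mathcal O(h^2)$ via the $\mathcal C^1$ regularity of $kv'''$ --- is exactly the standard derivation used for this result, with the only genuinely delicate point handled correctly. One cosmetic remark: the $\mathcal O(h^4)$ remainder you state in the midpoint formula for $w$ would require $w\in\mathcal C^5$, whereas the available regularity $w=kv'\in\mathcal C^3$ only gives an $\mathcal O(h^2)$ remainder there; this is all you need, so the conclusion is unaffected.
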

Let  $u(x,t)\in \mathcal{C}_{x,t}^{4,3}$ be a solution of problem
(\ref{ur01})--(\ref{ur02}).  Then we consider equation  (\ref{ur01})
for $(x,t)=(x_i,t_{j+1})\in\overline Q_T$,\,
$i=1,2,\ldots,N-1$,\, $j=1,2,\ldots,M-1$:
\begin{equation}\label{ur17}
	\partial_{0t_{j+1}}^{\alpha} u=\left.\mathcal{L}u(x,t)\right|_{(x_i,t_{j+1})}+f(x_i,t_{j+1}).
\end{equation}

On the basis of Lemmas \ref{lem1} and \ref{lemh2} we have
$$
\partial_{0t_{j+1}}^{\alpha} u= \Delta_{0t_{j+\sigma}}^{\alpha}u + \mathcal{O}(\tau^{3-\alpha})
$$
$$
\left.\mathcal{L}u(x,t)\right|_{(x_i,t_{j+1})}=\Lambda
u(x_i,t_{j+1})+\mathcal{O}(h^2),
$$
where the difference operator $\Lambda$ is defined as follows
$$
	(\Lambda y)_i=\left((ay_{\bar
	x})_x-dy\right)_i=\frac{a_{i+1}y_{i+1}-(a_{i+1}+a_i)y_i+a_iy_{i-1}}{h^2}-d_iy_i,
$$
$$
 y_{\bar x,i}=\frac{y_i-y_{i-1}}{h},\quad y_{x,i}=\frac{y_{i+1}-y_{i}}{h},
$$
with the coefficients
$a_i^{j+1}=k(x_{i-1/2},t_{j+1})$,\,
$d_i^{j+1}=q(x_{i},t_{j+1})$. Let
$\varphi_i^{j+1}=f(x_i,t_{j+1})$, then
we get the difference scheme with the approximation order
$\mathcal{O}(h^2+\tau^{3-\alpha})$:
\begin{equation}\label{ur18}
	\Delta_{0t_{j+1}}^{\alpha}y_i=\Lambda y^{j+1}_i
	+\varphi_i^{j+1}, \quad i=1,2,\ldots,N-1,\quad j=1,2,\ldots,M-1,
\end{equation}
\begin{equation}
	y(0,t)=0,\quad y(l,t)=0,\quad t\in \overline \omega_{\tau}, \quad
	y(x,0)=u_0(x),\quad  x\in \overline \omega_{h},\label{ur19}
\end{equation}

\textbf{Remark.} We assume that the solution $y^1_i$  is
found with the order of accuracy $\mathcal{O}(h^4+\tau^{3-\alpha})$. For
example, we can use $L1$-formula and solve problem (1.2)-(1.4) on the
time layer $[0,\tau]$ with step $\tau_1=\mathcal{O}(\tau^{\frac{3-\alpha}{2-\alpha}})$.

\subsection{Stability and convergence}

\begin{theorem}\label{thm_1} The difference scheme (\ref{ur18})--(\ref{ur19})
is unconditionally stable and its solution meets the following a
priori estimates:
\begin{equation}\label{ur20}
	\sum\limits_{j=1}^{M-1}\left(\|y^{j+1}\|_0^2 + \|y_{\bar x}^{j+1}]|_0^2\right)\tau \leq M_1\left(\|y^1\|_0^2 +\|y^0\|_0^2+\sum\limits_{j=1}^{M-1}\|\varphi^{j+1}\|_0^2\tau\right),
\end{equation}
where $\|y]|_0^2=\sum\limits_{i=1}^{N}y_i^2h$, $M_1 > 0$ is a known number independent of $h$ and $\tau$. 
\end{theorem}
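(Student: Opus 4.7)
The plan is to carry out the standard discrete energy argument in the spatial $L_2$ norm. For each $j = 1,\ldots,M-1$, I would multiply (\ref{ur18}) by $h\,y_i^{j+1}$ and sum over $i = 1,\ldots,N-1$ to obtain
\begin{equation*}
(\Delta_{0t_{j+1}}^{\alpha} y,\, y^{j+1}) = (\Lambda y^{j+1},\, y^{j+1}) + (\varphi^{j+1},\, y^{j+1}),
\end{equation*}
where $(\cdot,\cdot)$ denotes the discrete $L_2$ inner product. A pointwise application of Lemma \ref{lem16} in each $x_i$, followed by summation in $i$, bounds the fractional term from below by $\frac{\tau^{-\alpha}}{\Gamma(2-\alpha)}(\widetilde{\mathcal{E}}_{j+1}-\widetilde{\mathcal{E}}_j) - \frac{\bar c_j^{(\alpha)}\tau^{-\alpha}}{2\Gamma(2-\alpha)}\|y^0\|_0^2$, where $\widetilde{\mathcal{E}}_j$ is the spatial sum of the pointwise energies $\mathcal{E}_j$. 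The sign conditions $c_0^{(\alpha)}-c_2^{(\alpha)} \geq \max\{c_1^{(\alpha)}-c_2^{(\alpha)},\,-3(c_1^{(\alpha)}-c_2^{(\alpha)})\}$ needed to apply Lemma \ref{lm_ineq} inside Lemma \ref{lem16} follow from (\ref{url2_5}) and (\ref{url2_6}).

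For the elliptic term I would use discrete summation by parts with $k \geq c_1 > 0$ and $q \geq 0$ to obtain $(\Lambda y^{j+1},\,y^{j+1}) \leq -c_1\|y_{\bar x}^{j+1}]|_0^2$. The source term is handled by Cauchy--Schwarz together with the discrete Poincar\'e inequality $\|y^{j+1}\|_0 \leq C_P\|y_{\bar x}^{j+1}]|_0$ (available because of the homogeneous Dirichlet conditions), absorbing half of the elliptic contribution:
\begin{equation*}
|(\varphi^{j+1},\, y^{j+1})| \leq \tfrac{c_1}{2}\|y_{\bar x}^{j+1}]|_0^2 + \tfrac{C_P^2}{2c_1}\|\varphi^{j+1}\|_0^2.
\end{equation*}

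Multiplying the combined per-step inequality by $\tau$ and summing $j$ from $1$ to $M-1$ makes the fractional part telescope to $\frac{\tau^{1-\alpha}}{\Gamma(2-\alpha)}(\widetilde{\mathcal{E}}_M-\widetilde{\mathcal{E}}_1)$. Since $\widetilde{\mathcal{E}}_M \geq 0$ as a sum of squares it can be dropped, while $\widetilde{\mathcal{E}}_1$ is an explicit quadratic in $y^0,\,y^1$ whose coefficients depend only on $\alpha$, so it passes to the right-hand side as $C(\|y^0\|_0^2 + \|y^1\|_0^2)$. A final application of Poincar\'e bounds $\sum_j \|y^{j+1}\|_0^2 \tau$ by a constant multiple of $\sum_j \|y_{\bar x}^{j+1}]|_0^2 \tau$, producing the full left-hand side of (\ref{ur20}).

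The main technical obstacle, as I see it, is to show that the residual constant $\tau^{1-\alpha}\sum_{j=1}^{M-1}\bar c_j^{(\alpha)}$, which multiplies $\|y^0\|_0^2$ after summation, is uniformly bounded in $\tau$ and $M$. Using $\bar c_j^{(\alpha)} \leq a_j^{(\alpha)} < (1-\alpha)/j^\alpha$ from Lemma \ref{lm_pr_1} and (\ref{url2_4}), one estimates $\tau^{1-\alpha}\sum_{j=1}^{M-1}\bar c_j^{(\alpha)} \leq (1-\alpha)\,\tau\sum_{j=1}^{M-1}(j\tau)^{-\alpha} \leq T^{1-\alpha}$, so $M_1$ depends only on $\alpha$, $T$, $c_1$, and the Poincar\'e constant, independent of $h$ and $\tau$. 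A secondary bookkeeping step is to unfold $\widetilde{\mathcal{E}}_1$ explicitly from Lemma \ref{lem16} to verify that each of its coefficients is bounded by a constant depending only on $\alpha$, which is a direct consequence of the bounds on $a_s^{(\alpha)}$ and $b_s^{(\alpha)}$ in Lemma \ref{lm_pr_1}.
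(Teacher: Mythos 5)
Your proposal follows essentially the same route as the paper's proof: inner product of (\ref{ur18}) with $y^{j+1}$, the lower bound from Lemma \ref{lem16} for the fractional term, Green's formula (summation by parts) giving $(-\Lambda y,y)\geq c_1\|y_{\bar x}]|_0^2$, an $\varepsilon$-Cauchy/Poincar\'e absorption of the source term, and then multiplication by $\tau$, summation over $j$, telescoping of $\mathcal{E}_j$, and the bound on $\tau^{1-\alpha}\sum_j \bar c_j^{(\alpha)}$ via (\ref{url2_4}), which is exactly the step the paper invokes. The argument and the resulting constant $M_1$ match the paper's; no substantive difference or gap.
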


\begin{proof}   Taking the inner product of the equation
(\ref{ur18}) with $y^{j+1}$, we have
\begin{equation}\label{ur18_1}
	\left(y^{{j+1}},\Delta_{0t_{j+1}}^\alpha y\right)-\left(y^{{j+1}},\Lambda
	y^{{j+1}}\right)=\left(y^{{j+1}},\varphi^{j+1}\right).
\end{equation}

Using Lemma \ref{lem16}, we obtain 
$$
\left(y^{{j+1}},\Delta_{0t_{j+1}}^\alpha y\right)\geq \frac{\tau^{-\alpha}}{\Gamma(2-\alpha)}\left(E_{j+1}-E_{j}\right)+\frac{1}{2}\bar{\Delta}_{0t_{j+1}}^{\alpha}\|y\|_0^2
$$
$$
=\frac{\tau^{-\alpha}}{\Gamma(2-\alpha)}\left(\mathcal{E}_{j+1}-\mathcal{E}_{j}\right) - \frac{\tau^{-\alpha}}{2\Gamma(2-\alpha)}\bar c^{(\alpha)}_j \|y^0\|_0^2, ,\quad j=1, 2, \ldots, M-1,
$$
where
$$
E_{j}=\left(\frac{1}{2}\sqrt{\frac{c_0^{(\alpha)}-c_1^{(\alpha)}}{2}}+\frac{1}{2}\sqrt{\frac{c_0^{(\alpha)}+3c_1^{(\alpha)}-4c_2^{(\alpha)}}{2}}\right)^2\|y^{j}\|_0^{2}
$$
$$
+\left\|\sqrt{\frac{c_0^{(\alpha)}-c_1^{(\alpha)}}{2}}y^{j}-\left(\frac{1}{2}\sqrt{\frac{c_0^{(\alpha)}-c_1^{(\alpha)}}{2}}+\frac{1}{2}\sqrt{\frac{c_0^{(\alpha)}+3c_1^{(\alpha)}-4c_2^{(\alpha)}}{2}}\right)y^{j-1}\right\|_0^2.
$$ 
$$
\mathcal{E}_{j} = E_{j} + \frac{1}{2}\sum\limits_{s=0}^{j-1}\bar{c}_{j-1-s}^{(\alpha)}\|y^{s+1}\|_0^2.
$$

For the difference operator $\Lambda$ using
Green's first difference formula for the functions vanishing at  $x=0$ and $x=l$, we
get   $(-\Lambda y,y)\geq c_1\|y_{\bar x}]|_0^2$.

From (\ref{ur18_1}), using that 
$$
\left(y^{{j+1}},\varphi^{j+1}\right)\leq\frac{c_1}{l^2} \|y^{j+1}\|_0^2+\frac{l^2}{4c_1}\|\varphi^{j+1}\|_0^2\leq\frac{c_1}{2} \|y_{\bar x}^{j+1}]|_0^2+\frac{l^2}{4c_1}\|\varphi^{j+1}\|_0^2, 
$$ one obtains the inequality 
\begin{equation}\label{ur18_2}
\frac{\tau^{-\alpha}}{\Gamma(2-\alpha)}\left(\mathcal{E}_{j+1}-\mathcal{E}_{j}\right) + \frac{c_1}{2}\|y_{\bar x}^{j+1}]|_0^2\leq \frac{l^2}{4c_1}\|\varphi^{j+1}\|_0^2 + \frac{\tau^{-\alpha}}{2\Gamma(2-\alpha)}\bar c^{(\alpha)}_j \|y^0\|_0^2.
\end{equation}
Multiplying inequality (\ref{ur18_2}) by $\tau$ and summing the resulting relation over $j$ from $1$ to $M-1$ and taking into account inequality (\ref{url2_4}), one obtains a priori estimate (\ref{ur20}).

The stability and convergence
of the difference scheme (\ref{ur18}) - (\ref{ur19}) follow from the a priori estimate (\ref{ur20}).
\end{proof}

\subsection{Numerical results}

Numerical computations are executed for a test problem on the assumption that the
function
$$u(x,t)=\sin(\pi x)\left(t^{3+\alpha}+t^2+1\right)$$
is the exact solution of problem (\ref{ur01})--(\ref{ur02}) with
the coefficients $k(x,t)=2-\cos(xt)$, $q(x,t)=1-\sin(xt)$ and $l=1$,
$T=1$.

The errors ($z=y-u$) and convergence order (CO) in the norms
$\|\cdot\|_0$ and $\|\cdot\|_{\mathcal{C}(\bar\omega_{h\tau})}$,
where
$\|y\|_{\mathcal{C}(\bar\omega_{h\tau})}=\max\limits_{(x_i,t_j)\in\bar\omega_{h\tau}}|y|$,
are given in Table 1.

\textbf{Table 1} demonstrates that as the number of the spatial
subintervals and time steps increases, while $h^2=\tau^{3-\alpha}$,
then the maximum error decreases, as it is expected and the
convergence order of the approximate scheme is
$\mathcal{O}(h^2)=\mathcal{O}(\tau^{3-\alpha})$, where the convergence order
is given by the formula:
CO$=\log_{\frac{h_1}{h_2}}{\frac{\|z_1\|}{\|z_2\|}}$ ($z_{i}$ is the
error corresponding to $h_{i}$).

\textbf{Table 2} shows that if $h=1/50000$, then as the number of
time steps of our approximate scheme increases, then
the maximum error decreases, as it is expected and the convergence order
of time is $\mathcal{O}(\tau^{3-\alpha})$, where the convergence order is
given by the following formula:
CO$=\log_{\frac{\tau_1}{\tau_2}}{\frac{\|z_1\|}{\|z_2\|}}$.

\begin{table}[h!]
    \begin{center}
        \caption{The error and the convergence order in the norms $\|\cdot\|_{0}$
            and $\|\cdot\|_{C(\bar{\omega}_{h\tau})}$ when
            decreasing time-grid size for different values of $\alpha=0.1; 0.5; 0.9$, $\tau^{3-\alpha}=h^2.$}
        \label{tab:table1}
\begin{tabular}{|c|c|c|c|c|c|c|c|c|} 
    \hline
    \, \textbf{$\alpha$}\, & \,\textbf{$\tau$}\, & \, \textbf{$h$}  \, & \, \textbf{$\max\limits_{0\leq j\leq M}\|z^j\|_{0}$}  \, & \, \textbf{ CO } \, &\,  \textbf{$\max\limits_{0\leq j\leq M}\|z^j\|_{{C(\bar{\omega}_{h\tau})}}$}\, & \,\textbf{ CO} \, &\, \textbf{$\max\limits_{0\leq j\leq M}\|z^j_{\bar x}]|_{0}$} \,& \, \textbf{ CO } \,\\
    \hline
        0.1 & 1/10  & 1/29   & 1.694597e-3 &          & 2.387728e-3   &           & 5.341255e-2   &          \\
            & 1/20  & 1/78   & 2.343539e-4 & 2.8541   & 3.304157e-4   &  2.8533   & 7.389758e-4   &  2.8536  \\
            & 1/40  & 1/211  & 3.204204e-5 & 2.8707   & 4.517782e-5   &  2.8706   & 1.010417e-4   &  2.8706   \\
            & 1/80  & 1/575  & 4.316975e-6 & 2.8918   & 6.086836e-6   &  2.8919   & 1.361321e-5   &  2.8919   \\
            & 1/160 & 1/1571 & 5.786215e-7 & 2.8993   & 8.158422e-7   &  2.8993   & 1.824621e-6   &  2.8993   \\
           \hline
        0.5 & 1/10 & 1/18    & 4.556026e-3 &          & 6.401088e-3   &           & 1.434106e-2   &          \\
            & 1/20 & 1/43    & 8.011052e-4 & 2.5077   & 1.129064e-3   &  2.5032   & 2.524196e-3   &  2.5063  \\
            & 1/40 & 1/101   & 1.452643e-4 & 2.4633   & 2.047995e-4   &  2.4628   & 4.577935e-4   &  2.4630  \\
            & 1/80 & 1/240   & 2.575571e-5 & 2.4957   & 3.631166e-5   &  2.4957   & 8.116952e-5   &  2.4956  \\
            & 1/160 & 1/570  & 4.568945e-6 & 2.4950   & 6.441587e-6   &  2.4949   & 1.439907e-5   &  2.4950   \\
           \hline
        0.9 & 1/10 & 1/12    & 1.181474e-2 &          & 1.662948e-2   &           & 3.707516e-2   &          \\
            & 1/20 & 1/24    & 2.931153e-3 & 2.0110   & 4.125467e-3   &  2.0111   & 9.218339e-3   &  2.0078  \\
            & 1/40 & 1/49    & 7.018065e-4 & 2.0623   & 9.891705e-4   &  2.0603   & 2.208378e-3   &  2.0615  \\
            & 1/80 & 1/100   & 1.678681e-4 & 2.0637   & 2.367034e-4   &  2.0631   & 5.283157e-4   &  2.0635  \\
            & 1/160 & 1/207  & 3.921292e-5 & 2.0979   & 5.529153e-5   &  2.0979   & 1.234141e-4   &  2.0979   \\
           \hline
        \end{tabular}
    \end{center}
\end{table}

\begin{table}[h!]
    \begin{center}
        \caption{The error and the convergence order in the norms $\|\cdot\|_{0}$
            and $\|\cdot\|_{C(\bar{\omega}_{h\tau})}$ when
            decreasing time-grid size for different values of $\alpha=0.3; 0.5; 0.7$, $h=1/50000.$}
        \label{tab:table1}
\begin{tabular}{|c|c|c|c|c|c|c|c|} 
    \hline
    \, \textbf{$\alpha$}\, & \,\textbf{$\tau$}\,  & \, \textbf{$\max\limits_{0\leq j\leq M}\|z^j\|_{0}$}  \, & \, \textbf{ CO } \, &\,  \textbf{$\max\limits_{0\leq j\leq M}\|z^j\|_{{C(\bar{\omega}_{h\tau})}}$}\, & \,\textbf{ CO} \, &\, \textbf{$\max\limits_{0\leq j\leq M}\|z^j_{\bar x}]|_{0}$} \,& \, \textbf{ CO } \,\\
    \hline
        0.3 & 1/10     & 7.281556e-5 &          & 1.036431e-4   &           & 2.293180e-4   &          \\
            & 1/20     & 1.202886e-5 & 2.5977   & 1.712493e-5   &  2.5974   & 3.787942e-5   &  2.5978  \\
            & 1/40     & 1.881330e-6 & 2.6766   & 2.674734e-6   &  2.6786   & 5.928309e-6   &  2.6757   \\
            & 1/80    & 2.908398e-7 & 2.6934    & 4.140875e-7   &  2.6914   & 9.159351e-7   &  2.6943   \\
           \hline
        0.5 & 1/10     & 2.726395e-4 &          & 3.880588e-4   &           & 8.586014e-4   &          \\
            & 1/20     & 5.051848e-5 & 2.4321   & 7.190513e-5   &  2.4321   & 1.590939e-4   &  2.4321  \\
            & 1/40     & 9.152847e-6 & 2.4645   & 1.302443e-5   &  2.4648   & 2.882759e-5   &  2.4643  \\
            & 1/80     & 1.623335e-6 & 2.4952   & 2.310709e-6   &  2.4948   & 5.112271e-6   &  2.4954  \\
           \hline
        0.7 & 1/10     & 8.556143e-4 &          & 1.217803e-3   &           & 2.694425e-3   &          \\
            & 1/20     & 1.810137e-4 & 2.2408   & 2.576392e-4   &  2.2408   & 5.700338e-4   &  2.2408  \\
            & 1/40     & 3.759528e-5 & 2.2674   & 5.351332e-5   &  2.2673   & 1.183890e-4   &  2.2675  \\
            & 1/80     & 7.685019e-6 & 2.2904   & 1.093830e-5   &  2.2905   & 2.420107e-5   &  2.2903  \\
           \hline
        \end{tabular}
    \end{center}
\end{table}

\section{A compact difference scheme for the time
	fractional diffusion equation}

In this section for problem (\ref{ur01})--(\ref{ur02}), we create
a compact difference scheme with the approximation order
$\mathcal{O}(h^4+\tau^{3-\alpha})$ in the case when  $k=k(t)$ and $q=q(t)$.
The stability and convergence of the constructed difference scheme
in the grid  $L_2$ - norm with the rate equal to the order of the
approximation error are proved. The found results are
supported by the numerical calculations implemented for a test
example.

\subsection{Derivation  of the difference scheme}

Let a difference scheme be put into a correspondence with differential problem
(\ref{ur01})--(\ref{ur02}) in the case when  $k=k(t)$ and $q=q(t)$:
\begin{equation}\label{ur21}
	\Delta_{0t_{j+1}}^{\alpha}\mathcal{H}_hy_i=a^{j+1}y_{\bar
		xx,i}^{j+1}
	-d^{j+1}\mathcal{H}_hy_i^{j+1}+\mathcal{H}_h\varphi_i^{j+1}, \,
	i=1,\ldots,N-1,\, j=0,1,\ldots,M-1,
\end{equation}
\begin{equation}
	y(0,t)=0,\quad y(l,t)=0,\quad t\in \overline \omega_{\tau}, \quad
	y(x,0)=u_0(x),\quad  x\in \overline \omega_{h},\label{ur22}
\end{equation}
where $\mathcal{H}_hv_i=v_i+h^2v_{\bar xx,i}/12$, $i=1,\ldots,N-1$,
$a^{j+1}=k(t_{j+1})$, $d^{j+1}=q(t_{j+1})$,
$\varphi_i^{j+1}=f(x_i,t_{j+1})$.

From Lemma \ref{lem1}  it follows that if $u\in
\mathcal{C}_{x,t}^{6,3}$, then the difference scheme has the
approximation order  $\mathcal{O}(\tau^{3-\alpha}+h^4)$.

\subsection{Stability and convergence}

\begin{theorem} The difference scheme (\ref{ur21})--(\ref{ur22})
is unconditionally stable and its solution meets the following a
priori estimate:
\begin{equation}\label{ur23}
		\sum\limits_{j=1}^{M-1}\left(\|\mathcal{H}_hy^{j+1}\|_0^2 + \|y_{\bar x}^{j+1}]|_0^2\right)\tau \leq M_2\left(\|\mathcal{H}_h y^1\|_0^2+\|\mathcal{H}_h y^0\|_0^2+\sum\limits_{j=1}^{M-1}\|\mathcal{H}_h\varphi^{j+1}\|_0^2\tau\right),
\end{equation}
where $M_2>0$ is a known number independent of $h$ and $\tau$.
\end{theorem}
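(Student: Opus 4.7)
The plan is to adapt the argument of Theorem \ref{thm_1}, using the self-adjointness and commutativity of the compact operator $\mathcal{H}_h$ with time differencing, and taking the inner product of equation (\ref{ur21}) with $\mathcal{H}_h y^{j+1}$ instead of $y^{j+1}$. This gives
$$\bigl(\mathcal{H}_h y^{j+1},\Delta_{0t_{j+1}}^{\alpha}\mathcal{H}_h y\bigr) - a^{j+1}\bigl(\mathcal{H}_h y^{j+1}, y_{\bar xx}^{j+1}\bigr) + d^{j+1}\|\mathcal{H}_h y^{j+1}\|_0^2 = \bigl(\mathcal{H}_h y^{j+1},\mathcal{H}_h \varphi^{j+1}\bigr).$$

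First I would apply Lemma \ref{lem16} to the sequence $v^s = \mathcal{H}_h y^s$ to control the fractional term from below by $\frac{\tau^{-\alpha}}{\Gamma(2-\alpha)}(\mathcal{E}_{j+1}-\mathcal{E}_j) - \frac{\tau^{-\alpha}}{2\Gamma(2-\alpha)}\bar c_j^{(\alpha)}\|\mathcal{H}_h y^0\|_0^2$, with $\mathcal{E}_j$ built from $\|\mathcal{H}_h y^s\|_0^2$ exactly as in the proof of Theorem \ref{thm_1}. This works verbatim because Lemma \ref{lem16} applies pointwise in $x$ and then sums over the spatial grid.

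Next I would handle the elliptic term $-a^{j+1}(y_{\bar xx}^{j+1},\mathcal{H}_h y^{j+1})$. Using summation by parts with $y_0=y_N=0$, one finds
$$-\bigl(y_{\bar xx}, \mathcal{H}_h y\bigr) = \|y_{\bar x}]|_0^2 - \frac{h^2}{12}\|y_{\bar xx}\|_0^2 \geq \frac{2}{3}\|y_{\bar x}]|_0^2,$$
thanks to the inverse inequality $h^2\|y_{\bar xx}\|_0^2 \leq 4\|y_{\bar x}]|_0^2$. Since $a^{j+1}\geq c_1>0$ and $d^{j+1}\geq 0$, this delivers the dissipation $(2c_1/3)\|y_{\bar x}^{j+1}]|_0^2$. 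The right-hand side is then controlled via Cauchy--Schwarz and Poincar\'e: $(\mathcal{H}_h y^{j+1},\mathcal{H}_h \varphi^{j+1}) \leq \frac{c_1}{3}\|y_{\bar x}^{j+1}]|_0^2 + C\|\mathcal{H}_h \varphi^{j+1}\|_0^2$, after bounding $\|\mathcal{H}_h y^{j+1}\|_0$ in terms of $\|y_{\bar x}^{j+1}]|_0$ using the spectral estimate $\mathcal{H}_h\leq I$ together with the discrete Poincar\'e inequality $\|y\|_0 \leq C(l)\|y_{\bar x}]|_0$.

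Finally, combining the three estimates yields
$$\frac{\tau^{-\alpha}}{\Gamma(2-\alpha)}(\mathcal{E}_{j+1}-\mathcal{E}_j) + \frac{c_1}{3}\|y_{\bar x}^{j+1}]|_0^2 \leq C\|\mathcal{H}_h \varphi^{j+1}\|_0^2 + \frac{\tau^{-\alpha}}{2\Gamma(2-\alpha)}\bar c_j^{(\alpha)}\|\mathcal{H}_h y^0\|_0^2,$$
and multiplying by $\tau$, summing from $j=1$ to $M-1$, and invoking the lower bound (\ref{url2_4}) on $c_j^{(\alpha)}$ (which also controls the accumulated $\|\mathcal{H}_h y^{s+1}\|_0^2$ terms hidden inside $\mathcal{E}_{j+1}$) produces (\ref{ur23}). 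The main obstacle will be the elliptic-term identity: verifying that $\mathcal{H}_h$ combined with $-y_{\bar xx}$ still yields a coercive bound in $\|y_{\bar x}]|_0$ despite the $h^2$ perturbation, and then carefully tracking the constants so that the $\|y_{\bar x}^{j+1}]|_0^2$ contribution from the right-hand side can be absorbed into the left.
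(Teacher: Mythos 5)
Your proposal is correct and follows essentially the same route as the paper's proof: inner product with $\mathcal{H}_h y^{j+1}$, Lemma \ref{lem16} applied to $v=\mathcal{H}_h y$, the identity $-(\mathcal{H}_h y^{j+1},y_{\bar xx}^{j+1})=\|y_{\bar x}^{j+1}]|_0^2-\frac{h^2}{12}\|y_{\bar xx}^{j+1}\|_0^2\geq\frac{2}{3}\|y_{\bar x}^{j+1}]|_0^2$, an $\varepsilon$-Cauchy plus Poincar\'e bound on the right-hand side, and then multiplication by $\tau$, summation over $j$, and the bound (\ref{url2_4}). The only cosmetic differences are the choice of $\varepsilon$ (the paper takes $\varepsilon=c_1/(3l^2)$, leaving $\frac{c_1}{2}\|y_{\bar x}^{j+1}]|_0^2$ on the left) and the observation that the accumulated terms inside $\mathcal{E}_{j+1}$ need only be nonnegative for the telescoping, since $\|\mathcal{H}_h y^{j+1}\|_0^2$ on the left of (\ref{ur23}) is already controlled by $\|y_{\bar x}^{j+1}]|_0^2$ via Poincar\'e.
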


\begin{proof} Taking the inner product of the equation
(\ref{ur21}) with
$\mathcal{H}_hy^{j+1}=(\mathcal{H}_hy)^{j+1}$, we have
$$
(\mathcal{H}_hy^{j+1},\Delta_{0t_{j+1}}^{\alpha}\mathcal{H}_hy)-a^{j+1}(\mathcal{H}_hy^{j+1},y_{\bar
	xx}^{j+1})
$$
\begin{equation}\label{ur24}
	+d^{j+1}(\mathcal{H}_hy^{j+1},\mathcal{H}_hy^{j+1})=(\mathcal{H}_hy^{j+1},\mathcal{H}_h\varphi^{j+1}).
\end{equation}
We transform the terms in identity (\ref{ur24}) as
$$
(\mathcal{H}_hy^{j+1},\Delta_{0t_{j+1}}^{\alpha}\mathcal{H}_hy)\geq
\frac{\tau^{-\alpha}}{\Gamma(2-\alpha)}\left(E_{j+1}-E_{j}\right)+
\frac{1}{2}\bar\Delta_{0t_{j+\sigma}}^{\alpha}\|\mathcal{H}_hy\|_0^2=
$$
$$
=\frac{\tau^{-\alpha}}{\Gamma(2-\alpha)}\left(\mathcal{E}_{j+1}-\mathcal{E}_{j}\right) - \frac{\tau^{-\alpha}}{2\Gamma(2-\alpha)}\bar c^{(\alpha)}_j \|\mathcal{H}_hy^0\|_0^2, ,\quad j=1, 2, \ldots, M-1,
$$
where
$$
E_{j}=\left(\frac{1}{2}\sqrt{\frac{c_0^{(\alpha)}-c_1^{(\alpha)}}{2}}+\frac{1}{2}\sqrt{\frac{c_0^{(\alpha)}+3c_1^{(\alpha)}-4c_2^{(\alpha)}}{2}}\right)^2\|\mathcal{H}_h y^{j}\|_0^{2}
$$
$$
+\left\|\sqrt{\frac{c_0^{(\alpha)}-c_1^{(\alpha)}}{2}}\mathcal{H}_hy^{j}-\left(\frac{1}{2}\sqrt{\frac{c_0^{(\alpha)}-c_1^{(\alpha)}}{2}}+\frac{1}{2}\sqrt{\frac{c_0^{(\alpha)}+3c_1^{(\alpha)}-4c_2^{(\alpha)}}{2}}\right)\mathcal{H}_hy^{j-1}\right\|_0^2,
$$ 
$$
\mathcal{E}_{j} = E_{j} + \frac{1}{2}\sum\limits_{s=0}^{j-1}\bar{c}_{j-1-s}^{(\alpha)}\|\mathcal{H}_h y^{s+1}\|_0^2.
$$
$$
-(\mathcal{H}_hy^{j+1},y_{\bar
	xx}^{j+1})=-(y^{j+1},y_{\bar
	xx}^{j+1})-\frac{h^2}{12}\|y_{\bar
	xx}^{j+1}\|_0^2=\|y_{\bar
	x}^{j+1}]|_0^2-\frac{1}{12}\sum\limits_{i=1}^{N-1}(y_{\bar
	x,i+1}^{j+1}-y_{\bar x,i}^{j+1})^2h
$$
$$
\geq\|y_{\bar x}^{j+1}]|_0^2-\frac{1}{3}\|y_{\bar
	x}^{j+1}]|_0^2=\frac{2}{3}\|y_{\bar
	x}^{j+1}]|_0^2,
$$
$$
(\mathcal{H}_hy^{j+1},\mathcal{H}_h\varphi^{j+1})\leq\varepsilon\|\mathcal{H}_hy^{j+1}\|_0^2+
\frac{1}{4\varepsilon}\|\mathcal{H}_h\varphi^{j+1}\|_0^2
$$
$$
=\varepsilon\sum\limits_{i=1}^{N-1}\left(\frac{y_{i-1}^{j+1}+10y_{i}^{j+1}+y_{i+1}^{j+1}}{12}\right)^2h+
\frac{1}{4\varepsilon}\|\mathcal{H}_h\varphi^{j+1}\|_0^2
$$
$$
\leq\varepsilon\|y^{j+1}\|_0^2+\frac{1}{4\varepsilon}\|\mathcal{H}_h\varphi^{j+1}\|_0^2\leq \frac{\varepsilon l^2}{2}\|y_{\bar x}^{j+1}]|_0^2+\frac{1}{4\varepsilon}\|\mathcal{H}_h\varphi^{j+1}\|_0^2.
$$
In view of the above-performed transformations, from
identity (\ref{ur24}) at $\varepsilon=\frac{c_1}{3l^2}$ we
get the inequality
$$
\frac{\tau^{-\alpha}}{\Gamma(2-\alpha)}\left(\mathcal{E}_{j+1}-\mathcal{E}_{j}\right) + \frac{c_1}{2}\|y_{\bar x}^{j+1}]|_0^2\leq\frac{3l^2}{4c_1}\|\mathcal{H}_h\varphi^{j+1}\|_0^2+\frac{\tau^{-\alpha}}{2\Gamma(2-\alpha)}\bar c^{(\alpha)}_j \|\mathcal{H}_hy^0\|_0^2.
$$
The following process is similar to the proof of Theorem \ref{thm_1}, and we
leave it out.
\end{proof}

The norm $\|\mathcal{H}_hy\|_0$ is equivalent to the norm $\|y\|_0$, which follows from the inequalities
$$
\frac{5}{12}\|y\|_0^2\leq\|\mathcal{H}_hy\|_0^2\leq\|y\|_0^2.
$$

Using a
priori estimate (\ref{ur23}), we obtain the convergence result.

\begin{theorem}
 Let $u(x,t)\in\mathcal{C}_{x,t}^{6,3}$
be the solution of problem (\ref{ur01})--(\ref{ur02}) in the
case $k=k(t)$, $q=q(t)$, and let $\{y_i^j \,|\, 0\leq i\leq N, \,
1\leq j\leq M\}$ be the solution of difference scheme
(\ref{ur21})--(\ref{ur22}). Then it holds true that
$$
	\sqrt{\sum\limits_{j=1}^{M-1}\left(\|z^{j+1}\|_0^2 + \|z_{\bar x}^{j+1}]|_0^2\right)\tau}\leq C_R\left(\tau^{3-\alpha}+h^4\right),\quad 1\leq
j\leq M,
$$
where $z_i^j = u(x_i,t_j)-y_i^j$ and $C_R$ is a positive constant independent of $\tau$ and $h$.
\end{theorem}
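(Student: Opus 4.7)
The plan is to reduce the convergence estimate to the a priori stability estimate (4.3) applied to the error equation. First I would set $z_i^j = u(x_i,t_j) - y_i^j$ and subtract the discrete scheme (4.1) from the exact equation (1.1) evaluated at $(x_i,t_{j+1})$ under the assumption $k=k(t)$, $q=q(t)$. This yields a scheme of exactly the same form as (4.1)--(4.2), but with the source term $\mathcal{H}_h\varphi_i^{j+1}$ replaced by a truncation error $\psi_i^{j+1}$ arising from two sources: (i) the discrepancy $\partial_{0t_{j+1}}^{\alpha}\mathcal{H}_h u - \Delta_{0t_{j+1}}^{\alpha}\mathcal{H}_h u$, which by Lemma \ref{lem1} applied to $\mathcal{H}_h u(x_i,\cdot)\in\mathcal{C}^3[0,T]$ is $\mathcal{O}(\tau^{3-\alpha})$; and (ii) the spatial error $a^{j+1}\bigl(u_{\bar xx}-\mathcal{H}_h u_{xx}\bigr)$ from the compact operator, which by the standard fourth-order compact identity (analogous to Lemma \ref{lemh2} in its compact version, using $u\in\mathcal{C}^{6}$ in $x$) is $\mathcal{O}(h^4)$. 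Thus $\max_{i,j}|\psi_i^{j+1}|\leq C(\tau^{3-\alpha}+h^4)$, whence $\|\mathcal{H}_h\psi^{j+1}\|_0 \leq C(\tau^{3-\alpha}+h^4)$ uniformly in $j$.

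Next I would apply the a priori estimate (4.3) from the preceding theorem directly to $z$, treating $\psi$ as the right-hand side. The boundary conditions give $z^{j}(0)=z^{j}(l)=0$, and the initial condition gives $z^0\equiv 0$, so $\|\mathcal{H}_h z^0\|_0 = 0$. For the first layer, invoking the Remark of Section 3 (which constructs $y^1$ with accuracy $\mathcal{O}(h^4+\tau^{3-\alpha})$ via an L1 start with refined sub-step), we obtain $\|\mathcal{H}_h z^1\|_0\leq C(\tau^{3-\alpha}+h^4)$. Then (4.3) gives
\begin{equation*}
\sum_{j=1}^{M-1}\Bigl(\|\mathcal{H}_h z^{j+1}\|_0^2 + \|z_{\bar x}^{j+1}]|_0^2\Bigr)\tau \leq M_2\Bigl(\|\mathcal{H}_h z^1\|_0^2 + \sum_{j=1}^{M-1}\|\mathcal{H}_h\psi^{j+1}\|_0^2\tau\Bigr).
\end{equation*}
Since there are at most $M$ terms in the sum and $M\tau=T$, the right-hand side is bounded by $M_2\bigl(C(\tau^{3-\alpha}+h^4)^2 + T\, C^2(\tau^{3-\alpha}+h^4)^2\bigr) = C_R^2(\tau^{3-\alpha}+h^4)^2$.

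Finally I would convert the $\mathcal{H}_h$-norm to the ordinary norm using the stated equivalence $\tfrac{5}{12}\|z\|_0^2\leq \|\mathcal{H}_h z\|_0^2\leq \|z\|_0^2$, which turns the left side into $\sum_{j=1}^{M-1}\bigl(\|z^{j+1}\|_0^2+\|z_{\bar x}^{j+1}]|_0^2\bigr)\tau$ (up to an absolute constant absorbed into $C_R$). Taking the square root delivers the asserted bound.

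The only subtle step is verifying the truncation error analysis for $\mathcal{H}_h$ combined with the L2 operator: one has to commute $\mathcal{H}_h$ with $\Delta_{0t_{j+1}}^\alpha$ (trivial since both act on different variables) and confirm that the compact discretization of $(k u_x)_x$ when $k=k(t)$ indeed produces an $O(h^4)$ residue when the standard identity $u_{\bar xx} - \mathcal{H}_h u_{xx} = O(h^4)$ is used. This is the main bookkeeping hurdle but entails no new ideas beyond Lemma \ref{lem1} and the classical compact-difference identity, so the rest of the argument is a direct reduction to the already-proven a priori estimate.
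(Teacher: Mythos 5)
Your proposal is correct and follows essentially the same route as the paper, which derives the convergence result directly from the a priori estimate (\ref{ur23}) applied to the error $z=u-y$ (whose equation has the truncation error $\mathcal{O}(\tau^{3-\alpha}+h^4)$ as source, with $z^0=0$ and $z^1$ controlled by the starting procedure), followed by the norm equivalence $\frac{5}{12}\|z\|_0^2\leq\|\mathcal{H}_hz\|_0^2$. The paper leaves these steps implicit, and your write-up fills them in correctly.
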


\subsection{Numerical results}

Numerical calculations are performed for a test problem when the
function
$$u(x,t)=\sin(\pi x)\left(t^{3+\alpha}+t^2+1\right)$$
is the exact solution of the problem (\ref{ur01})--(\ref{ur02}) with
the coefficients $k(x,t)=2-\cos(t)$, $q(x,t)=1-\sin(t)$ and $l=1$,
$T=1$.

The errors ($z=y-u$) and convergence order (CO) in the norms
$\|\cdot\|_0$ and $\|\cdot\|_{\mathcal{C}(\bar\omega_{h\tau})}$,
where
$\|y\|_{\mathcal{C}(\bar\omega_{h\tau})}=\max\limits_{(x_i,t_j)\in\bar\omega_{h\tau}}|y|$,
are given in Table 1.

\textbf{Table 3} shows that as the number of the spatial
subintervals and time steps increases keeping $h^4=\tau^{3-\alpha}$, 
the maximum error decreases, as it is expected and the
convergence order of the compact difference scheme is
$\mathcal{O}(h^2)=\mathcal{O}(\tau^{3-\alpha})$, where the convergence order
is given by the formula:
CO$=\log_{\frac{h_1}{h_2}}{\frac{\|z_1\|}{\|z_2\|}}$ ($z_{i}$ is the
error corresponding to $h_{i}$).

\textbf{Table 4} demonstrates that if $h=1/2000$, then as the number of
time steps of our approximate scheme increases, then
the maximum error decreases, as it is expected and the convergence order
of time is $\mathcal{O}(\tau^{3-\alpha})$, where the convergence order is
given by the following formula:
CO$=\log_{\frac{\tau_1}{\tau_2}}{\frac{\|z_1\|}{\|z_2\|}}$.

\begin{table}[h!]
    \begin{center}
        \caption{The error and the convergence order in the norms $\|\cdot\|_{0}$
            and $\|\cdot\|_{C(\bar{\omega}_{h\tau})}$ when
            decreasing time-grid size for different values of $\alpha=0.1; 0.5; 0.9$, $\tau^{3-\alpha}=(h/2)^4.$}
        \label{tab:table1}
\begin{tabular}{|c|c|c|c|c|c|c|c|c|} 
    \hline
    \, \textbf{$\alpha$}\, & \,\textbf{$\tau$}\, & \, \textbf{$h$}  \, & \, \textbf{$\max\limits_{0\leq j\leq M}\|z^j\|_{0}$}  \, & \, \textbf{ CO } \, &\,  \textbf{$\max\limits_{0\leq j\leq M}\|z^j\|_{{C(\bar{\omega}_{h\tau})}}$}\, & \,\textbf{ CO} \, &\, \textbf{$\max\limits_{0\leq j\leq M}\|z^j_{\bar x}]|_{0}$} \,& \, \textbf{ CO } \,\\
    \hline
        0.1 & 1/40  & 1/29   & 1.321499e-6  &          & 1.866140e-6   &           & 4.149581e-6   &          \\
            & 1/80  & 1/47   & 1.912169e-7  & 2.7889   & 2.702706e-7   &  2.7875   & 6.006140e-7   &  2.7884  \\
            & 1/160 & 1/79   & 2.443382e-8  & 2.9683   & 3.454781e-8   &  2.9677   & 7.675607e-8   &  2.9680  \\
            & 1/320 & 1/131  & 3.267337e-9  & 2.9027   & 4.620380e-9   &  2.9025   & 1.026439e-8   &  2.9026  \\
            & 1/640 & 1/217  & 4.395804e-10 & 2.8939   & 6.216471e-10  &  2.8938   & 1.380970e-9   &  2.8939  \\
           \hline
        0.5 & 1/40  & 1/21   & 1.178052e-5 &          & 1.661359e-5   &           & 3.697512e-5   &          \\
            & 1/80  & 1/31   & 2.241843e-6 & 2.3936   & 3.166375e-6   &  2.3914   & 7.039944e-6   &  2.3929  \\
            & 1/160 & 1/47   & 4.096169e-7 & 2.4523   & 5.789623e-7   &  2.4512   & 1.286610e-6   &  2.4519  \\
            & 1/320 & 1/73   & 7.195323e-8 & 2.5091   & 1.017336e-7   &  2.5086   & 2.260303e-7   &  2.5089  \\
            & 1/640 & 1/113  & 1.268803e-8 & 2.5036   & 1.794185e-8   &  2.5034   & 3.985934e-8   &  2.5035  \\
           \hline
        0.9 & 1/40 & 1/13    & 1.470087e-4 &          & 2.063859e-4   &           & 4.607185e-4   &          \\
            & 1/80 & 1/19    & 3.419750e-5 & 2.1039   & 4.819739e-5   &  2.0983   & 1.073122e-4   &  2.1020  \\
            & 1/160 & 1/29   & 7.726281e-6 & 2.1460   & 1.091058e-5   &  2.1432   & 2.426096e-5   &  2.1451  \\
            & 1/320 & 1/41   & 1.824394e-6 & 2.0823   & 2.578190e-6   &  2.0812   & 5.730103e-6   &  2.0820  \\
            & 1/640 & 1/59   & 4.260141e-7 & 2.0984   & 6.022614e-7   &  2.0978   & 1.338204e-6   &  2.0982  \\
           \hline
        \end{tabular}
    \end{center}
\end{table}

\begin{table}[h!]
    \begin{center}
        \caption{The error and the convergence order in the norms $\|\cdot\|_{0}$
            and $\|\cdot\|_{C(\bar{\omega}_{h\tau})}$ when
            decreasing time-grid size for different values of $\alpha=0.3; 0.5; 0.7$, $h=1/1000.$}
        \label{tab:table1}
\begin{tabular}{|c|c|c|c|c|c|c|c|} 
    \hline
    \, \textbf{$\alpha$}\, & \,\textbf{$\tau$}\,  & \, \textbf{$\max\limits_{0\leq j\leq M}\|z^j\|_{0}$}  \, & \, \textbf{ CO } \, &\,  \textbf{$\max\limits_{0\leq j\leq M}\|z^j\|_{{C(\bar{\omega}_{h\tau})}}$}\, & \,\textbf{ CO} \, &\, \textbf{$\max\limits_{0\leq j\leq M}\|z^j_{\bar x}]|_{0}$} \,& \, \textbf{ CO } \,\\
    \hline
        0.3 & 1/10     & 6.155178e-5 &          & 8.704736e-5   &           & 1.933705e-4   &          \\
            & 1/20     & 1.016170e-5 & 2.5986   & 1.437081e-5   &  2.5986   & 3.192392e-5   &  2.5986  \\
            & 1/40     & 1.642526e-6 & 2.6291   & 2.322883e-6   &  2.6291   & 5.160147e-6   &  2.6291   \\
            & 1/80     & 2.620773e-7 & 2.6478   & 3.706331e-7   &  2.6478   & 8.233399e-7   &  2.6478   \\
            & 1/160    & 4.147475e-8 & 2.6596   & 5.865421e-8   &  2.6596   & 1.302967e-7   &  2.6596   \\
           \hline
        0.5 & 1/10     & 2.308509e-4 &          & 3.264725e-4   &           & 7.252393e-4   &          \\
            & 1/20     & 4.277465e-5 & 2.4321   & 6.049249e-5   &  2.4321   & 1.343804e-4   &  2.4321  \\
            & 1/40     & 7.775493e-6 & 2.4597   & 1.099620e-5   &  2.4597   & 2.442742e-5   &  2.4597  \\
            & 1/80     & 1.398769e-6 & 2.4747   & 1.978159e-6   &  2.4747   & 4.394362e-6   &  2.4747  \\
            & 1/160    & 2.500909e-6 & 2.4836   & 3.536819e-7   &  2.4836   & 7.856834e-7   &  2.4836  \\
           \hline
        0.7 & 1/10     & 7.275485e-4 &          & 1.028909e-3   &           & 2.285660e-3   &          \\
            & 1/20     & 1.539001e-4 & 2.2410   & 2.176477e-4   &  2.2410   & 4.834914e-4   &  2.2410  \\
            & 1/40     & 3.192728e-5 & 2.2691   & 4.515200e-5   &  2.2691   & 1.003024e-4   &  2.2691  \\
            & 1/80     & 6.558744e-6 & 2.2832   & 9.275465e-6   &  2.2832   & 2.060489e-5   &  2.2832  \\
            & 1/160    & 1.340380e-6 & 2.2907   & 1.895584e-6   &  2.2907   & 4.210928e-6   &  2.2907  \\
           \hline
        \end{tabular}
    \end{center}
\end{table}

\section{Conclusion}

In the current paper we construct a $L2$ type difference approximation of the Caputo fractional derivative
with the approximation order $\mathcal{O}(\tau^{3-\alpha})$.
The fundamental features of this difference operator are
studied. New difference schemes of the second and fourth
approximation order in space and the $3-\alpha$ approximation order in
time for the time fractional diffusion equation with variable
coefficients are also constructed. The stability and convergence
of these schemes with the rate equal to the
order of the approximation error are proved. The method can be
without difficulty expanded to include other time fractional partial differential
equations with other boundary conditions.

Numerical tests entirely
corroborating the found theoretical results are implemented. In all
the calculations Julia v1.5.1 is used.

\end{document}